\def\ulabel#1#2{\begingroup
\let\@ldCurrentLabel\@currentlabel
\def\@currentlabel{#2~\@ldCurrentLabel}
\label{#1}\endgroup} \makeatother
\newtheorem{thm}{Theorem}[section]
\newtheorem{lem}[thm]{Lemma}
\newtheorem{cor}[thm]{Corollary}
\newtheorem{prop}[thm]{Proposition}
\newtheorem{que}[thm]{Question}
\newtheorem{conj}[thm]{Conjecture}
\theoremstyle{definition}
\newtheorem{define}[thm]{Definition}
\newtheorem{ex}[thm]{Example}
\DeclareMathAlphabet{\mathbbn}{U}{bbold}{m}{n}
\newcommand{\op}{\operatorname}
\newcommand{\dirlim}{\mathop{\varinjlim}\limits}
\newcommand{\invlim}{\mathop{\varprojlim}\limits}
\newcommand{\N}{\mathbb{N}}
\newcommand{\Z}{\mathbb{Z}}
\newcommand{\Q}{\mathbb{Q}}
\newcommand{\R}{\mathbb{R}}
\newcommand{\B}{\mathbb{B}}
\DeclareMathOperator{\interior}{int}
\title{The Geometry and Fundamental Groups of Solenoid Complements}
\author[G. R. Conner, M. Meilstrup, and D. Repov\v s]{Gregory R. Conner, Mark Meilstrup, and Du\v san Repov\v s}
\begin{document}
\pagenumbering{roman}

\begin{abstract}
A solenoid is an inverse limit of circles.  When a solenoid is embedded in
three space, its complement is an open three manifold.  We discuss the
geometry and fundamental groups of such manifolds, and show that the
complements of different solenoids (arising from different inverse limits)
have different fundamental groups. Embeddings of the same solenoid can give
different groups; in particular, the nicest embeddings are unknotted at each
level, and give an Abelian fundamental group, while other embeddings have
non-Abelian groups.  We show using geometry that every solenoid has
uncountably many embeddings with non-homeomorphic complements.
\end{abstract}

\today

\maketitle

\pagenumbering{arabic} \setcounter{page}{1}

In this paper we study 3-manifolds which are complements of solenoids in
$S^3$.  This theory is a natural extension of the study of knot complements
in $S^3$; many of the tools that we use are the same as those used in knot
theory and braid theory.

We will mainly be concerned with studying the geometry and fundamental groups
of 3-manifolds which are solenoid complements.  We review basic information
about solenoids in \ref{sec-intro}.  In \ref{sec-fund-gp} we discuss the
calculation of the fundamental group of solenoid complements. In
\ref{sec-unknotted} we show that every solenoid has an embedding in $S^3$ so
that the complementary 3-manifold has an Abelian fundamental group, which is
in fact a subgroup of $\mathbb{Q}$ (\ref{unknotted}). In \ref{sec-knotted} we
show that each solenoid has an embedding whose complement has a non-Abelian
fundamental group  (\ref{knotted}).  In \ref{sec-geometry} we take a more
geometric approach, and show that each solenoid admits uncountably many
embeddings in $S^3$ with non-homeomorphic complements (\ref{thm-geometry}).
We achieve this by showing that these complements have distinct geometries
using JSJ theory, and thus by Mostow-Prasad rigidity are distinct manifolds.

\section{Introduction}\ulabel{sec-intro}{section}

A solenoid is a topological space that is an inverse limit of circles. Let
$\{n_i\}$ be a sequence of positive integers, and let $f_i:S^1\to S^1$ be
defined by $f_i(z)=z^{n_i}$, where $S^1$ is thought of as the unit circle in
the complex plane. Then we define the solenoid $$\Sigma(n_i) =\invlim
(S^1,f_i).$$ If the tail of the sequence is $1,1,1,\dots$, then the solenoid
is just a circle. If the sequence ends in $2,2,2,\dots$, then we have what is
called the \emph{dyadic solenoid}, $\Sigma_2$.  We will use the dyadic
solenoid for specific examples throughout this paper.

We note that multiple sequences $\{n_i\}$ can determine the same solenoid, up
to homeomorphism. For instance, we may assume each $n_i$ is prime by
replacing any composite number by the sequence of its prime factors.  We may
also remove any finite initial segment of the sequence, and we may reorder
the sequence (infinitely).  Bing notes that if you remove a finite number of
elements from two sequences so that in the remainders, every prime occurs the
same number of times, then the solenoids are topologically equivalent; he
also says that perhaps the converse is true \cite{bing}.  The converse is
confirmed by McCord \cite{mccord}.  A few other references discussing
solenoids are \cite{dantzig,vietoris,hagopian,chinese}.

As solenoids are obtained via an inverse limit construction of compact topological groups $S^1$, we get the standard result that solenoids are also compact topological groups.  Additionally, it is standard that a solenoid has uncountably many path components, each of which is dense in the solenoid, and also that solenoids are not locally connected, nor are its path components.  However, the path components are fairly nice in that they are bijective images of open arcs.  In particular, there is a continuous bijection from the real line onto each path component.  This bijection however is not a homeomorphism, as small neighborhoods in the solenoid path component are not locally connected.  A lift of a small neighborhood to the real line contains infinitely many small disjoint neighborhoods centered at a collection of points unbounded on the line.

While these standard facts together with the inverse limit construction give some nice properties of solenoids, they do not make it apparent that all solenoids embed in $S^3$.  To see this, we will construct the solenoid $\Sigma(n_i)$ as a nested intersection of solid tori.  Take a solid torus $T_0$ with cross-sectional diameter $d_0$ in $S^3$, using the standard metric from $S^4$.
Embed a solid torus $T_1$ with cross-sectional diameter
$d_1<d_0/2$ inside of $T_0$ that wraps around $T_0$ $n_1$ times.
Continue this process, embedding a solid torus $T_i$ with cross-sectional diameter
$d_i<d_{i-1}/2$ inside of $T_{i-1}$, which wraps around $T_{i-1}$
$n_i$ times. The nested intersection $\bigcap T_i$ is an embedding
of $\Sigma(n_i)$ in $S^3$.  See \ref{embedding} for an example with the dyadic solenoid (where $n_i\equiv2$).

\begin{figure}
\begin{tabular}{cc}
\includegraphics[scale=0.2]{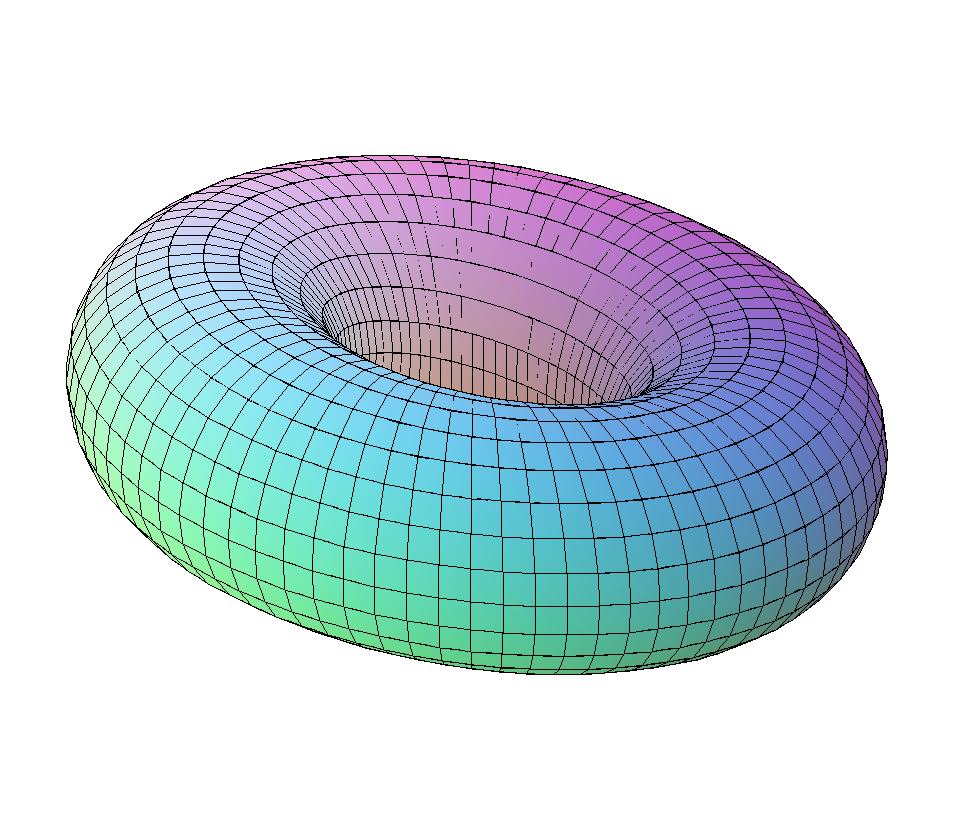}
&
\includegraphics[scale=0.2]{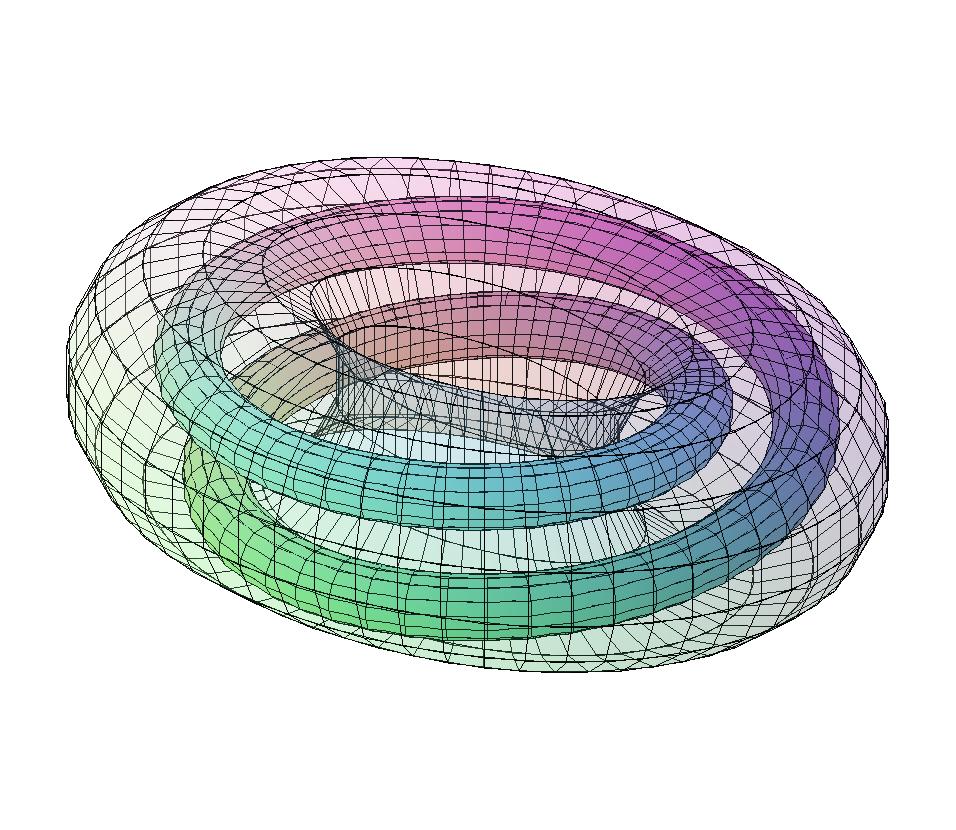}
\\
\begin{tabular}{c}
\includegraphics[scale=0.225]{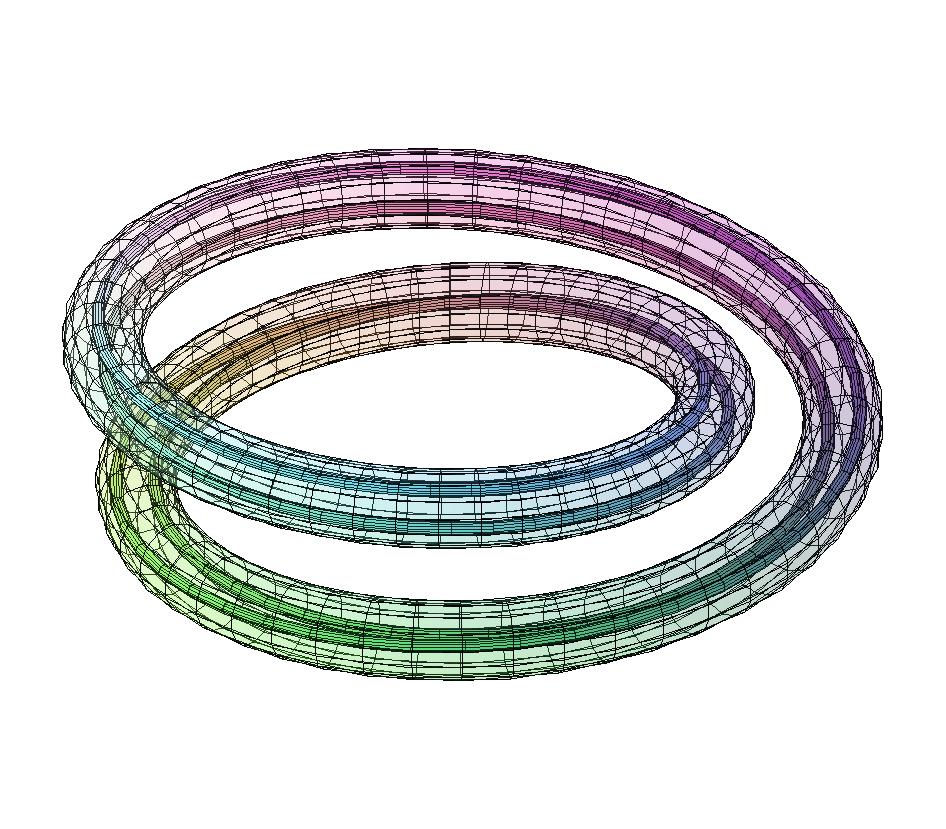}
\end{tabular}
&
\begin{tabular}{c}
\includegraphics[scale=0.4]{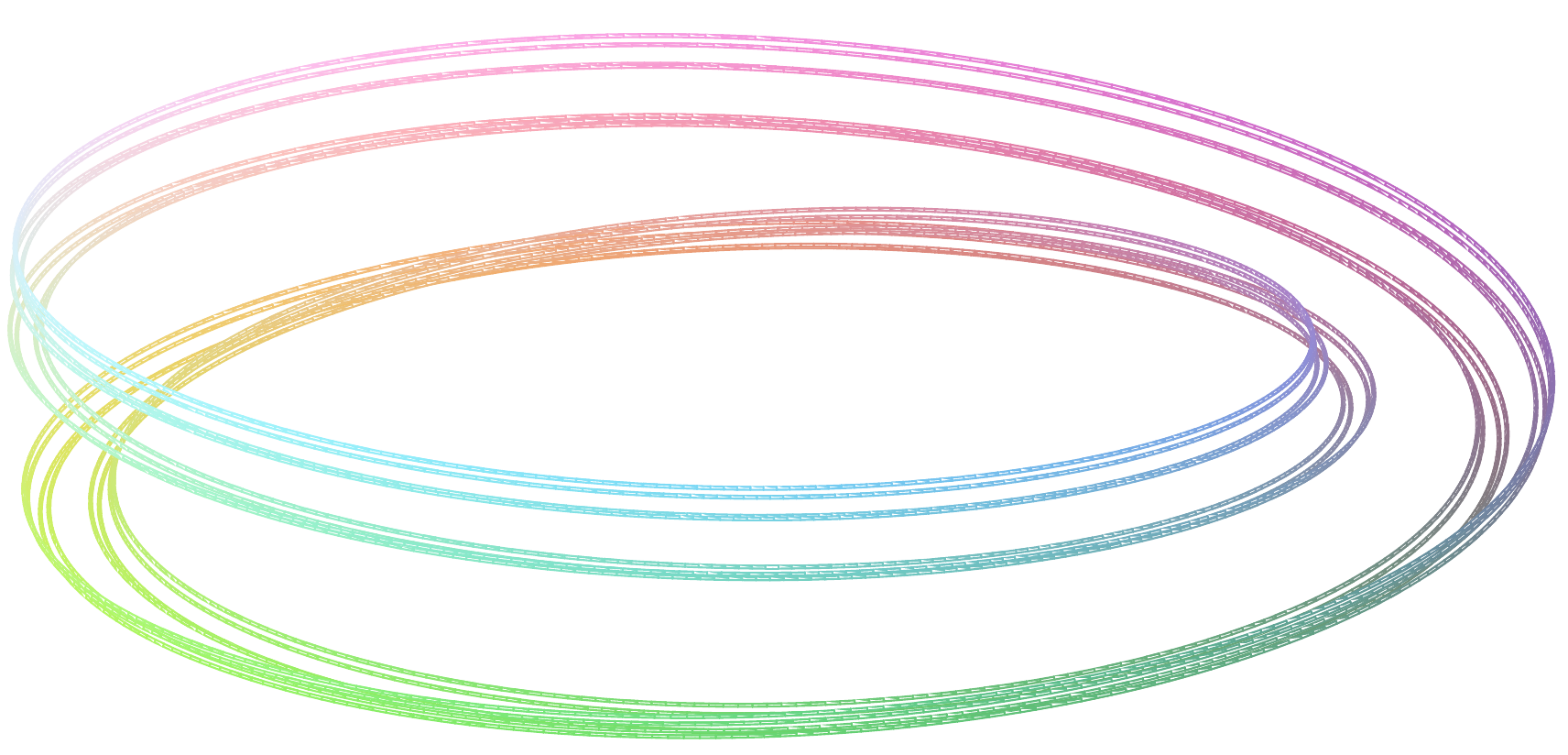}
\end{tabular}
\end{tabular}
\caption[Embedding the dyadic solenoid in $S^3$.]{Embedding the dyadic solenoid in $S^3$.
Begin with a standard unknotted solid torus $T_0$ (top left).  Then embed a second torus $T_1$ inside $T_0$, wrapping around the longitude of $T_0$ twice (top right).  A third torus $T_2$ is shown wrapping twice inside $T_1$ (bottom left).  The solenoid is the infinite intersection of such nested tori (bottom right).
}\ulabel{embedding}{Figure}
\end{figure}

We note that while this nested intersection construction may seem
canonical, there are in fact many ways to embed each $T_i$ inside of
$T_{i-1}$, even if we require that $T_i$ never `folds back' on
itself (i.e.\ $T_i$ is embedded in a monotone fashion inside
$T_{i+1}$). In the simple case where $n_i\equiv2$, $T_i$ can have
any odd number of half twists with itself; when $n_i>2$, there can
be much more complicated braiding. While this does not change the
topology of the solenoid itself, this does change its complement
significantly. This is analogous to knot theory: while every knot is
itself a circle, knot complements are quite different. Thus, we
could consider the study of solenoid embeddings and their
complements as solenoid knot theory. This is also quite related to
braid groups, as each sufficiently nice embedding of $T_i$ into $T_{i-1}$ can
be represented by a braid on $n_i$ strands that gives a transitive
permutation of the strands (otherwise the closed braid will result
in a link with multiple components).  This issue will be discussed further in the following sections, and some diagrams are given in \ref{embedding2}.

All of the embeddings of solenoids that we will consider here will be obtained as nested intersections of solid tori, where each torus is a closed braid in the previous torus.
We note that similar work has been done in \cite{chinese}, where they discuss what they call \emph{tame} embeddings, similar to our braided embeddings.  In \cite{chinese} they are concerned with what they call \emph{equivalent} embeddings, that is, an ambient homeomorphism of $S^3$ taking one embedded solenoid to the other.  We are mainly concerned with the homeomorphism type of the complement, and we believe this to be a distinct question than the notion of equivalent embeddings in \cite{chinese}.
%
%
%

It is also interesting to note that solenoids arise in the theory of dynamical systems.  In the case where the sequence $n_i$ is constant, the solenoid can be a hyperbolic attractor of a dynamical system.  These solenoids as attractors were first studied by Smale, and are sometimes called Smale attractors.  A discussion of solenoids as hyperbolic attractors can be found in many books on dynamics, see for instance \cite{hasselblatt_katok}.  A recent result of Brown \cite{brown} shows that generalized solenoids (classified by Williams \cite{williams}) are the only 1-dimensional topologically mixing hyperbolic attractors in 3-manifolds.

\section{Fundamental Groups}\ulabel{sec-fund-gp}{section}

When a solenoid $\Sigma$ is embedded in $S^3$, the complement $\Sigma^c=S^3-\Sigma$ is an open 3-manifold.  As these manifolds are the complement of a non-locally connected space, they have a complicated structure ``at infinity,'' and are not the interior of a compact manifold with boundary.  We will discuss the fundamental groups of such manifolds, which will depend on the particular embedding chosen for the solenoid.
Recall that we are starting with an embedding of the solenoid as a nested intersection of solid tori, each of which is a closed braid in the previous torus:
$$T_0\supset T_1\supset T_2\supset\dots;  \qquad  \Sigma=\bigcap T_i.$$
This gives us that the solenoid complement is an increasing union of torus complements:
$$(S^3-T_0)\subset (S^3-T_1)\subset (S^3-T_2)\subset \dots;  \qquad  \Sigma^c=\bigcup(S^3-T_i).$$
These torus complements are in fact knot complements, where the knots will generally be satellite knots, assuming there is some knotting in the embedding (see the following sections).

The fundamental group of the solenoid complement is then the direct limit of the fundamental groups of the knot complements.  This direct limit is in fact injective, i.e.\ each group injects into the final direct limit, so that it is in fact a union of knot groups, as given by the following lemmas.  Note that our embeddings of solenoids as nested closed braids ensure that the core curve of each torus links the meridional curve of the previous solid torus with linking number $n_i\neq0$.

\begin{lem}\ulabel{jims-lemma}{Lemma}
Suppose that $T_1,T_2$ are solid tori in $\R^3$ with $T_2\subset\interior(T_1)$ and such that the core curve $J$ of $T_2$ links the meridional curve $K$ of $\partial T_1$ having linking number $lk(J,K)\neq0$.  Then the map $\pi_1(\R^3-T_1)\to\pi_1(\R^3-T_2)$ is injective.
\end{lem}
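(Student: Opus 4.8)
The plan is to cut $\R^3-T_2$ along the torus $\partial T_1$ and apply the Seifert--van Kampen theorem. Write $X=\overline{\R^3-T_1}$ for the closed complementary region (which is homotopy equivalent to $\R^3-T_1$, so has the same fundamental group) and $Y=T_1-\interior(T_2)$ for the region trapped between the two tori, so that $X\cap Y=\partial T_1\cong T^2$ and $X\cup Y=\R^3-\interior(T_2)$. Since $X$, $Y$, and $\partial T_1$ are each connected, van Kampen presents $\pi_1(\R^3-T_2)$ as the pushout of $\pi_1(X)\leftarrow\pi_1(\partial T_1)\to\pi_1(Y)$, and the map in the statement is the structure map $\pi_1(X)\to\pi_1(\R^3-T_2)$. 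I want to show this structure map is injective.

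The heart of the argument is to show that $\partial T_1$ is incompressible in the middle piece $Y$, i.e.\ that $\pi_1(\partial T_1)\to\pi_1(Y)$ is injective; this is exactly where the hypothesis $lk(J,K)\neq0$ enters. Suppose not. Then by the loop theorem there is an embedded disk $\Delta\subset Y$ whose boundary is an essential simple closed curve on $\partial T_1$. As $\Delta$ lies in the solid torus $T_1$, and the only essential simple closed curve on the boundary of a solid torus that bounds a disk inside it is the meridian, $\partial\Delta$ must be isotopic to the meridian $K$. Thus $\Delta$ is a meridional disk of $T_1$ which, being contained in $Y$, is disjoint from $T_2$ and hence from its core $J$. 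But a meridional disk of $T_1$ computes the linking number, so $lk(J,K)=J\cdot\Delta=0$, contradicting the hypothesis. Therefore $\partial T_1$ is incompressible in $Y$.

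It remains to pass from incompressibility in $Y$ to injectivity of $\pi_1(X)\to\pi_1(\R^3-T_2)$, and here I expect the main obstacle: the amalgamating torus need not be incompressible on the $X$ side. If $\pi_1(\partial T_1)\to\pi_1(X)$ is also injective (equivalently, the core of $T_1$ is knotted), then the pushout is a genuine amalgamated free product over $\pi_1(\partial T_1)$, and the standard normal-form theorem for amalgamated products shows each factor, in particular $\pi_1(X)$, injects. If instead $\partial T_1$ compresses in $X$, then $T_1$ is unknotted, $X$ is a solid torus, and $\pi_1(X)\cong\Z$ is generated by the meridian $K$; gluing $X$ back simply kills the longitude $\ell$, so $\pi_1(\R^3-T_2)\cong\pi_1(Y)/\langle\langle\ell\rangle\rangle$. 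Here one cannot invoke the amalgamation theorem (the naive version is false once an edge map has a kernel), so instead I would argue by homology: in $H_1(\R^3-T_2)\cong\Z$, generated by a meridian of $T_2$, the class of $K$ equals $lk(J,K)$ times the generator and is therefore of infinite order. Hence $K$ generates an infinite cyclic subgroup and $\pi_1(X)=\langle K\rangle\to\pi_1(\R^3-T_2)$ is injective. In both cases the map is injective, which completes the proof.
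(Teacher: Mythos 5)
Your proof is correct, but it takes a genuinely different route from the paper's, so let me compare the two. The paper argues directly with a singular disk: given a loop in $\R^3-T_1$ that bounds a singular disk in $\R^3-T_2$, it puts the disk in general position with respect to $\partial T_1$, removes inessential intersection curves by cut-and-paste, and applies the loop theorem to an innermost essential curve; the resulting embedded disk lies either in $T_1-T_2$ (forcing a meridian disk that must meet $J$, contradicting $lk(J,K)\neq0$) or in $\R^3-\interior(T_1)$ (forcing $T_1$ to be unknotted, so the loop is homotopic to a power of $K$ and its linking number with $J$ gives a contradiction). You replace this with algebraic machinery: Seifert--van Kampen over the splitting along $\partial T_1$, the normal form theorem for amalgamated free products when $T_1$ is knotted, and a homology computation when it is not. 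The key geometric inputs are identical in both proofs --- the loop theorem producing a meridian disk in $T_1-\interior(T_2)$ that would have to miss $J$, and the linking-number argument disposing of the unknotted case --- and your case split (is $\partial T_1$ incompressible in $X$?) corresponds exactly to the paper's dichotomy of where the innermost disk lies. What your route buys: it isolates incompressibility of $\partial T_1$ in $T_1-\interior(T_2)$ as a clean, reusable lemma, and it correctly flags the trap that the amalgamation theorem fails when an edge map has a kernel, handling that case by a separate homological argument; the paper's final paragraph deals with the same issue but more tersely. What the paper's route buys: it avoids the amalgamated-product formalism entirely and treats everything with one uniform singular-disk argument. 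One cosmetic caveat in yours: since the ambient space is $\R^3$ rather than $S^3$, when $T_1$ is unknotted the closed complement $X$ is a solid torus minus an interior point rather than a solid torus; this changes nothing, since $\pi_1(X)\cong\Z$ generated by $K$ either way.
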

\begin{proof}
Suppose to the contrary that there is a loop $\ell$ in $\R^3-T_1$ that is not nulhomotopic in $\R^3-T_1$ but is nulhomotopic in $\R^3-T_2$.  Let $D:\B^2\to\R^3-T_2$ be a singular disk in $\R^3-T_2$ bounded by $\ell$.

Put $D$ in general position with respect to $\partial T_1$.  By cut and paste, remove all curves of intersection with $\partial T_1$ that are nulhomotopic in $\partial T_1$.  Since the core curve $J$ is not nulhomotopic in $\R^3-T_1$, at least one curve of intersection must remain.

Take such a curve whose preimage is innermost in the domain $\B^2$ of $D$.  This curve is essential in $\partial T_1$ but trivial either in $\R^3-\interior(T_1)$ or in $T_1-T_2$.  The loop theorem thus supplies a nonsingular disk $D'$ whose boundary is nontrivial in $\partial T_1$ but whose interior either lies in $\R^3-T_1$ or in $T_1-T_2$.

In the latter case, $\partial D'$ must be the meridian of $\partial T_1$, hence must link the core curve $J$ of $T_2$, and $D'$ must intersect $J$, a contradiction.  Hence $D'\subset \R^3-\interior(T_1)$, $\partial D'$ must be the longitude of $T_1$, and $T_1$ must be unknotted.

But that implies that $\ell$ is a multiple $m\cdot K$ of the meridional curve $K$ of $\partial T_1$, hence must have linking number $m\cdot lk(J,K)\neq 0$ with $J$, hence cannot be nulhomotopic missing $T_2$, a contradiction.
\end{proof}

\begin{lem}\ulabel{directlimit}{Lemma}
Let $\Sigma=\bigcap T_i$ be the intersection of nested solid tori $T_i$ in $S^3$, such that for each $i$,
the core curve $J$ of $T_{i+1}$ links the meridional curve $K$ of $\partial T_i$ having linking number $lk(J,K)\neq0$.
Then for every $i$, the map $\iota_*:\pi_1(S^3-T_i)\to \pi_1(S^3-\Sigma)$ induced by inclusion
is injective, and $\displaystyle \pi_1(S^3-\Sigma)=\dirlim_i \pi_1(S^3-T_i)= \bigcup_i\pi_1(S^3-T_i)$.
\end{lem}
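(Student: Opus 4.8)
The plan is to realize $\Sigma^c = S^3 - \Sigma$ as an increasing union of the open sets $U_i := S^3 - T_i$, identify its fundamental group with the direct limit of the $\pi_1(U_i)$ by a standard compactness argument, and then use the injectivity of the bonding maps supplied by Lemma~\ref{jims-lemma} to conclude that the limit is the union.

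First I would fix a basepoint $x_0 \in S^3 - T_0$; since the tori are nested this point lies in every $U_i$, and since $\bigcap_i T_i = \Sigma$ we have $\bigcup_i U_i = \Sigma^c$ with $U_0 \subset U_1 \subset \cdots$ an increasing sequence of open sets. I would then invoke the standard fact that the fundamental group of such an increasing union is the direct limit $\dirlim_i \pi_1(U_i, x_0)$, with bonding maps induced by the inclusions $U_i \hookrightarrow U_{i+1}$. The proof of this fact is a compactness argument: a loop $S^1 \to \Sigma^c$ has compact image, hence factors through some $U_i$, which gives surjectivity of the canonical map from the direct limit; and a nullhomotopy $S^1 \times [0,1] \to \Sigma^c$ of a loop lying in some $U_i$ has compact image contained in some $U_j$ with $j \ge i$, so a class that dies in $\pi_1(\Sigma^c)$ is already trivial at stage $j$ and hence in the limit, giving injectivity of that same map. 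This identifies $\pi_1(\Sigma^c)$ with $\dirlim_i \pi_1(S^3 - T_i)$.

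Next I would verify that each bonding map $\pi_1(S^3 - T_i) \to \pi_1(S^3 - T_{i+1})$ is injective using Lemma~\ref{jims-lemma}. Since that lemma is stated for $\R^3$, I would pick a point $p \in S^3 - T_0$ and identify $S^3 - \{p\}$ with $\R^3$; the inclusion $\R^3 - T_i \hookrightarrow S^3 - T_i$ induces an isomorphism on fundamental groups, as a single point has codimension three and so loops and homotopies can be pushed off it by general position. The hypothesis that the core of $T_{i+1}$ links the meridian of $\partial T_i$ with nonzero linking number is precisely what Lemma~\ref{jims-lemma} demands, so every bonding map is injective.

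It then remains to apply the formal fact that in a direct limit of a sequence of groups with injective bonding maps, each group injects into the limit---the kernel of the map $\pi_1(S^3-T_i) \to \dirlim_j \pi_1(S^3-T_j)$ consists of elements annihilated by some composite of bonding maps, which is trivial here---and that the limit coincides with the union of the images. Translating back, $\iota_* : \pi_1(S^3 - T_i) \to \pi_1(S^3 - \Sigma)$ is injective for every $i$ and $\pi_1(S^3 - \Sigma) = \bigcup_i \pi_1(S^3 - T_i)$, as claimed. I expect the only genuine care to be needed in the compactness argument of the second step, together with the basepoint bookkeeping and the minor $\R^3$-versus-$S^3$ discrepancy; the algebra of the final step is purely formal.
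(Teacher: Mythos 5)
Your proposal is correct and takes essentially the same route as the paper: a compactness argument to factor loops and nullhomotopies through finite stages $S^3-T_j$, followed by stage-by-stage application of \ref{jims-lemma} (equivalently, injectivity of each bonding map) to pull a nullhomotopy back down to the original stage, plus the formal direct-limit algebra. The only differences are presentational---you separate the direct-limit identification from the injectivity statement and explicitly handle the $\R^3$-versus-$S^3$ discrepancy, which the paper leaves implicit.
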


\begin{proof}
Let $\gamma$ be a nulhomotopic loop in $S^3-\Sigma$, and let $H$ be a nulhomotopy of $\gamma$ in $S^3-\Sigma$.  As $\Sigma$ and the images of $\gamma,H$ are compact, we see that there must be indices $i,k$ such that the image of $\gamma$ lies in $S^3-T_i$, and the image of $H$ lies in $S^3-T_{i+k}$.  As long as $k>0$, we have $\op{im}\gamma\subset S^3-T_i \subset S^3-T_{i+k-1}$, and
we may use \ref{jims-lemma} to see that $\gamma$ is nulhomotopic in $S^3-T_{i+k-1}$.  Repeating this process $k$ times shows that $\gamma$ is in fact nulhomotopic in $S^3-T_{i}$.
Thus each $\pi_1(S^3-T_i)$ injects into $\pi_1(S^3-\Sigma)$, and the lemma is proven.
\end{proof}

%
%

Recall that $S^3$ is the union of two solid tori; we will embed a solenoid into one of these.  In order to calculate the fundamental group of the solenoid complement, we will cut the space along the tori $\{T_i\}$, to get pieces $T_{i-1}-T_{i}$ that are each a solid torus minus a braid, together with one piece that is simply a solid torus (the initial complementary solid torus in $S^3$).  We will calculate the fundamental group of each piece, and then use the Seifert Van Kampen Theorem to get relations between the pieces, as the outer torus of one piece is the inner torus, or braid, in the previous piece.  The union of all of these groups and the Van Kampen relations will give a presentation for the fundamental group by \ref{directlimit}.

The fundamental group $\pi_1(T_{i-1}-T_{i})$ can be calculated by considering the space $T_{i-1}-T_{i}$
as a mapping cylinder over an $n_i$-punctured disk.
Thus the group has the form
$$\pi_1(T_{i-1}-T_{i})= \big\langle t,x_1,\dots,x_{n_i} ~\big|~ t^{-1}x_k t = w_k(x_1,\dots,x_{n_i}) \big\rangle.
$$
The $x_i$'s represent free generators of the fundamental group of a punctured disk, and $t$ represents the longitude of the outer torus $T_{i-1}$.  Here $w_k$ is some word in the $x_j$'s, depending on the embedding (braiding)
of one solid torus inside the previous.  We note that the for each $k$, if strand $k$ attaches to strand $m$ in the closed braid, then the word $w_k$ is a conjugate of $x_m$.  See \ref{mappingcylinder}.

\begin{figure}
\includegraphics[scale=1.0]{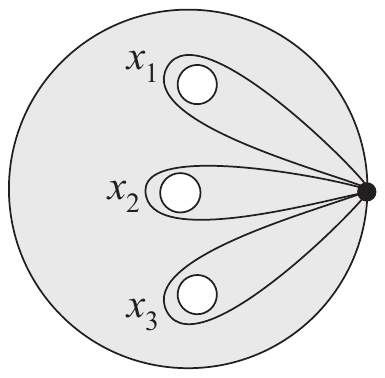}
\hspace{.5in}
\includegraphics[scale=.8]{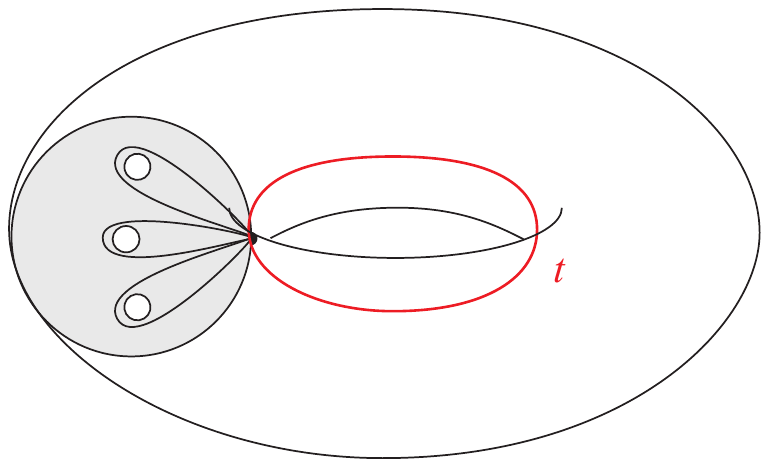}
\caption{Generators for $\pi_1(T_{i-1}-T_{i})$.}\ulabel{mappingcylinder}{Figure}
\end{figure}

We will apply Seifert Van Kampen to get the relations connecting the various pieces.  As such, we need some notation to differentiate the generators from each piece $(T_{i-1}-T_{i})$. The loop $x_{(i)k}$ will be a meridian of the torus $T_i$, or equivalently a loop going around one of the strands of the braid inside of $T_{i-1}$.  The loop $t_{(i)}$ will be a longitude of $T_i$.
Thus the variables $x_{(i)k},t_{(i-1)}$ correspond to the fundamental group of
the piece $(T_{i-1}-T_{i})$ as discussed previously.
The word $v_{(i)}\big(\{x_{(i)k}\}\big)$ is determined by the embedding, relating the longitudes $t_{i-1},t_i$ of the tori $T_{i-1},T_{i}$.
With this notation in place, we use Van Kampen's Theorem to get relations such as
$$x_{(i-1)1}=\prod_{k=1}^{n_{i}} x_{(i)k}, \qquad
t_{(i)}=t_{(i-1)}^{n_i}v_{(i)}(x_{(i)1},\dots,x_{(i)n_i}) .
$$

Putting all of this together, we get an infinite presentation for $\pi_1(\Sigma^c)$.
The generators are $t_{(i)},x_{(i)k}$ from each level $i$, with $k=1,\dots, n_i$.
The relations come from each level and Van Kampen's theorem.  Recall that the words $w_{(i)k},v_{(i)}$ are dependent on the braided embedding of one torus in the previous.
Also note that $t_{(0)}=e$, since the longitude of $T_0$ is trivial in $S^3$, as its complement is simply a solid torus.
$$\pi_1(\Sigma^c)=\Big\langle t_{(i)},x_{(i)k} ~\Big|~  t_{(i-1)}^{-1}x_{(i)k} t_{(i-1)} = w_{(i)k}(\{x_{(i)k}\}), t_{(0)}=e \qquad \qquad
$$
$$\qquad \qquad \qquad  \qquad ~~ x_{(i-1)1}=\prod_{k=1}^{n_{i}} x_{(i)k}, t_{(i)}=t_{(i-1)}^{n_i}v_{(i)}(\{x_{(i)k}\}) \Big\rangle
$$

\begin{ex}[Dyadic Solenoid]\ulabel{dyadicpresentation}{Example}
In the case of the dyadic solenoid with defining sequence $n_i\equiv2$, our presentation for $\pi_1$ simplifies.  There are only two $x_{(i)k}$'s at each level $i$, and since $x_{(i-1)1}=x_{(i)1}x_{(i)2}$, we do not actually need any of the generators $x_{(i)2}$.  If we let $z_i=x_{(i)1}$ be the meridian of $T_i$, and $s_i=t_{(i)}$ the longitude of $T_i$, we then get a simplified presentation, where $R$ represents relations dependent on the braiding:
$$\pi_1=\big\langle s_i,z_i ~\big|~ [s_i,z_i]=e,~ R,~ s_0=e \big\rangle
$$
\end{ex}


\section{Unknotted Solenoids}\ulabel{sec-unknotted}{section}

We define an unknotted embedding of any solenoid in $S^3$, and discuss the fundamental group of its complement. We will discuss knotted embeddings in the next section.

\begin{define}
An embedding of a solenoid as a nested intersection of solid tori
$T_i$ is \emph{unknotted} if each $T_i$ is unknotted (in $S^3$).
\end{define}

We will show that every solenoid has an unknotted embedding, and that the complement of an unknotted embedding has Abelian fundamental group.

While there are many braids on $n$ strands that give the unknot, the simplest is probably $b(n)=\prod
\sigma_i=\sigma_1\sigma_2\dots\sigma_{n-1}$, in terms of the standard braid generators $\sigma_i$.  Note that we could just as easily have reversed the order, or used inverses ($\sigma_i^{-1}$). These closed braids just wrap around $(n-1)$ times without any crossings, and then take the first (or last) strand over (or under) all of the other strands.

There is an obvious way to try to embed the next level in this one:
thicken each strand to a tube, draw $n_{i}$ parallel strands in
each tube (crossing all of the strands in one tube over all of the
strands in another when the tubes cross), and put the braid for the
next level in one tube in some portion where there are no crossings
of the tubes.
Unfortunately, this obvious way to iterate this process does not
produce an unknot. This is due to the fact that there is some
inherent twisting in each stage that will show up in the following
stages, if not dealt with carefully.

As an example, consider just two levels, where both $n_1,n_2= 2$. On the
first level, we have two strands, and we will use $b(1)=\sigma_1$ as our
braid (if we had chosen to use inverses for $b(n)$ the following works out
similarly).  On the next level, we have four strands.  If we start
with $\sigma_1$, and then just follow the previous stage with the
strands parallel to each other, the resulting knot is actually a
trefoil, rather than the unknot.  However, if you instead start with
$\sigma_1^{-1}$ (or even $\sigma_1^{-3}$), you do get the unknot. It
is more enlightening to say that if you begin with
$\sigma_1^\epsilon \sigma_1^{-2}$ you get the unknot, if
$\epsilon=\pm1$. This is true because unwinding the doubled
structure from the first level cancels out the $\sigma_1^{-2}$,
leaving $\sigma_1^\epsilon$, which is the unknot. We leave it to the
reader to verify that the given braids yield the specified knots.  These braids and the resulting knots are shown in \ref{embedding2}.

\begin{figure}
\begin{tabular}{ccc}
\begin{tabular}{c}
\includegraphics[scale=0.5]{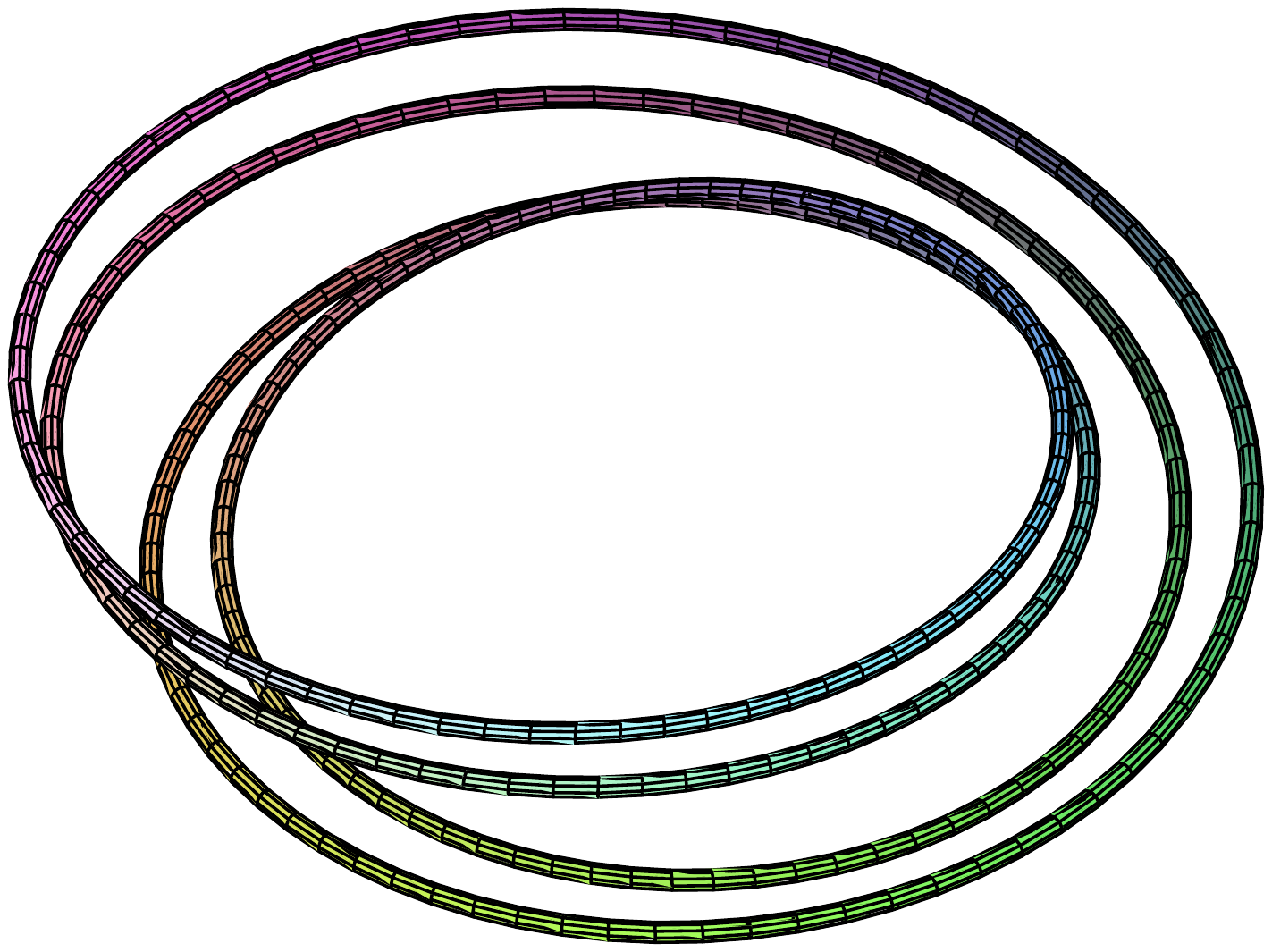}
\end{tabular}
&\qquad\qquad\qquad&
\begin{tabular}{c}
\includegraphics[scale=0.4]{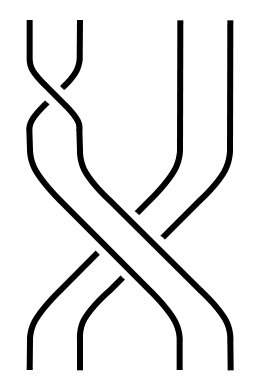}
\end{tabular}
\\\\\\
\begin{tabular}{c}
\includegraphics[scale=0.38]{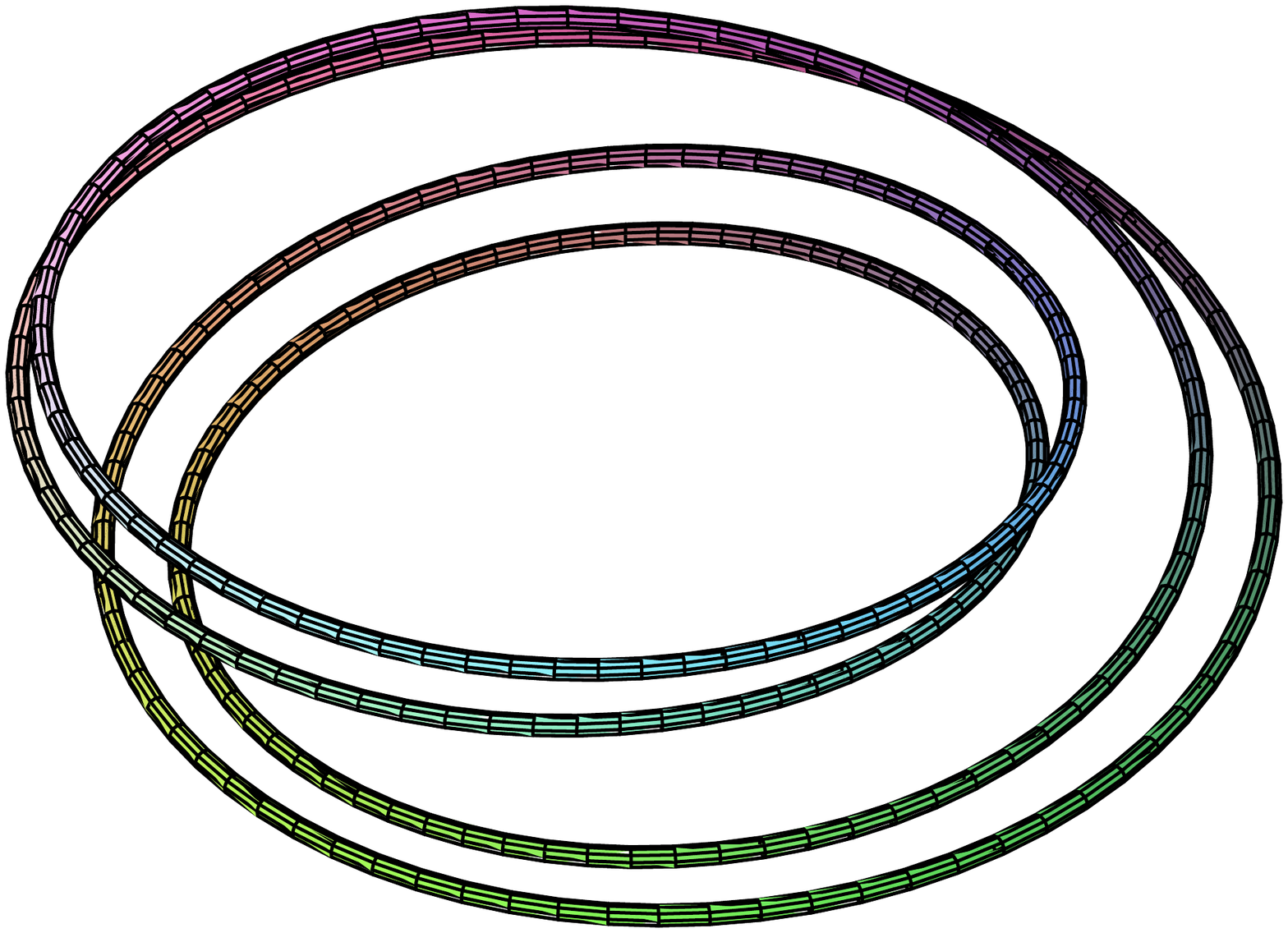}
\end{tabular}
&\qquad\qquad\qquad&
\begin{tabular}{c}
\includegraphics[scale=0.4]{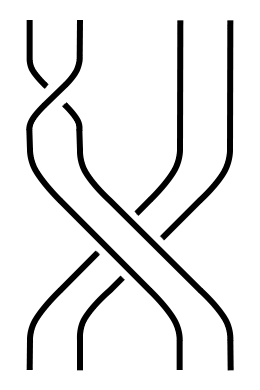}
\end{tabular}
\\\\\\
\begin{tabular}{c}
\includegraphics[scale=0.38]{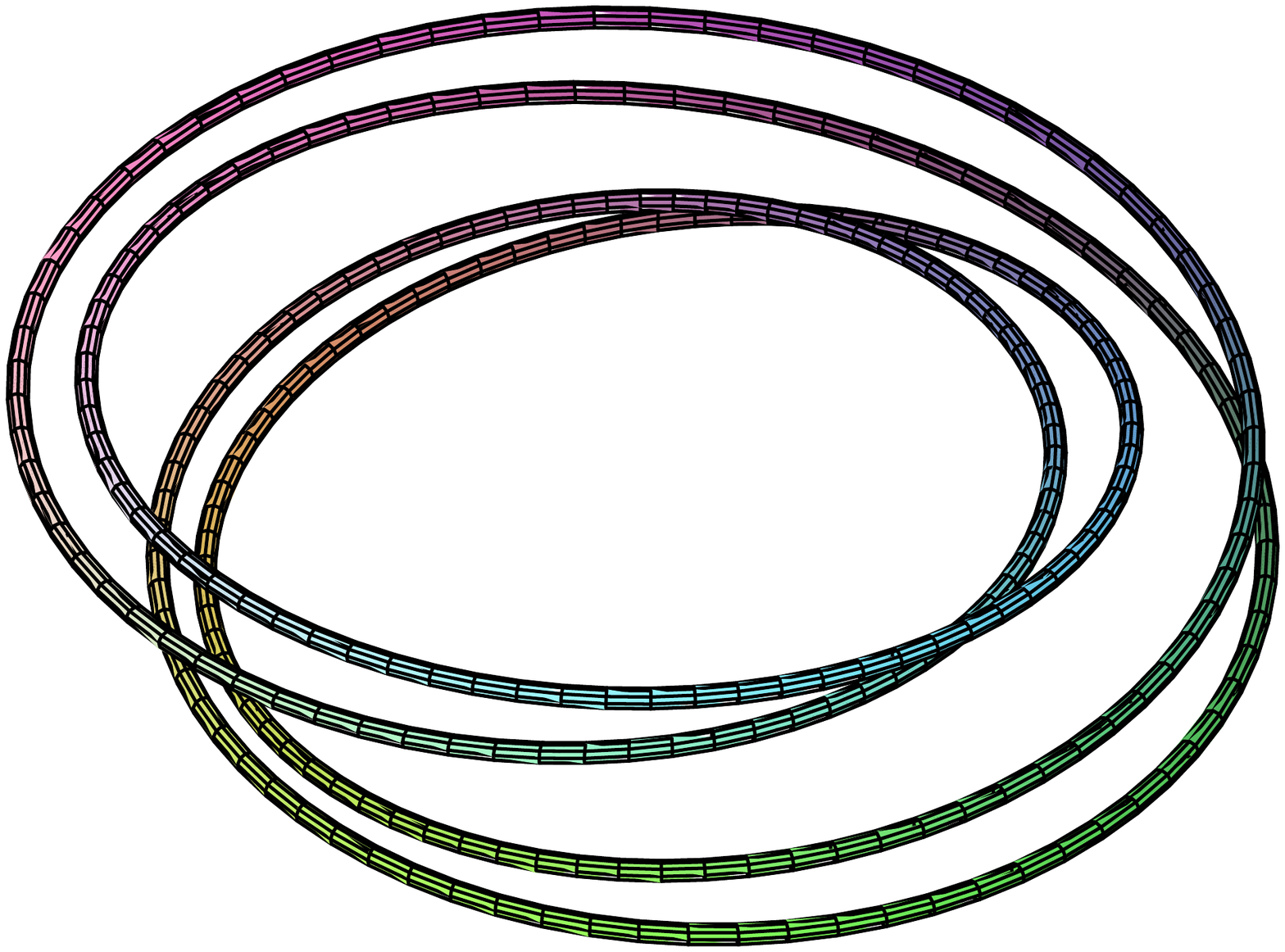}
\end{tabular}
&\qquad\qquad\qquad&
\begin{tabular}{c}
\includegraphics[scale=0.4]{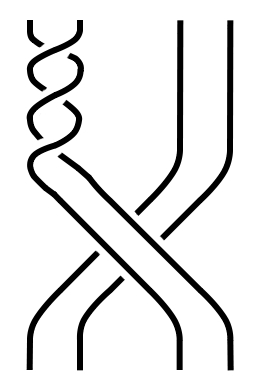}
\end{tabular}
\end{tabular}
\caption[Two levels of the dyadic solenoid embedded as the trefoil and the unknot.]{Two levels of the dyadic solenoid embedded as the trefoil (top row), the unknot (middle row), and another version of the unknot (bottom row). The diagrams on the right show the corresponding braids.}\ulabel{embedding2}{Figure}
\end{figure}

Even though the obvious method does not work, it is possible to keep
track of the twists in such a way to get an unknotted embedding of
the solenoid.  This basically amounts to adding some amount of extra full twists (of all the strands) to correct for the twisting from the previous level.  In the case of the braids $b(n)$ which we have chosen above, this ends up being precisely $(n-1)$ full twists. The case of the dyadic solenoid with $n_i\equiv2$ amounts to adding one full twist, and three levels of this embedding are shown in \ref{unknotbraids}.
This twisting will also become apparent as we discuss the algebraic structure later, particularly in the example of the dyadic solenoid (see \ref{dyadicunknot}).

We note here that this process of constructing unknotted braids can be continued indefinitely, thus providing an embedding of the solenoid.  At first there may seem to be a difficulty due to the fact that our embedding requires nested tori, while our braid construction here does not obviously satisfy that condition.  However, one can check that each level of our braid construction does nicely embed in the previous. For example, in \ref{unknotbraids}, taking a tubular neighborhood of the four strands on the left and the four strands on the right gives a 2-braid with one crossing, just as in the top left single crossing in the diagram.  Also, taking a neighborhood of two strands at a time gives a 4-braid that is the same as the top left portion of the diagram (above the full twist on four strands).

We briefly describe one other way to see that this always works, even for more complicated braids that may represent the unknot.  Start with one level embedded in $S^3$ as a torus, which has an ambient isotopy $h$ to the standard unknotted torus.  Embed the next desired level in the interior of the standard unknotted torus, and then composing with $h^{-1}$ gives the desired embedding of the next level.  While this process works for any braid representation for the unknot, our chosen simple braids $b(n)$ admit a formulaic description.  We will proceed using our chosen braids $b(n)$, and at the end of the section we will comment on the general case.

\begin{figure}
\begin{center}
\includegraphics[scale=.55]{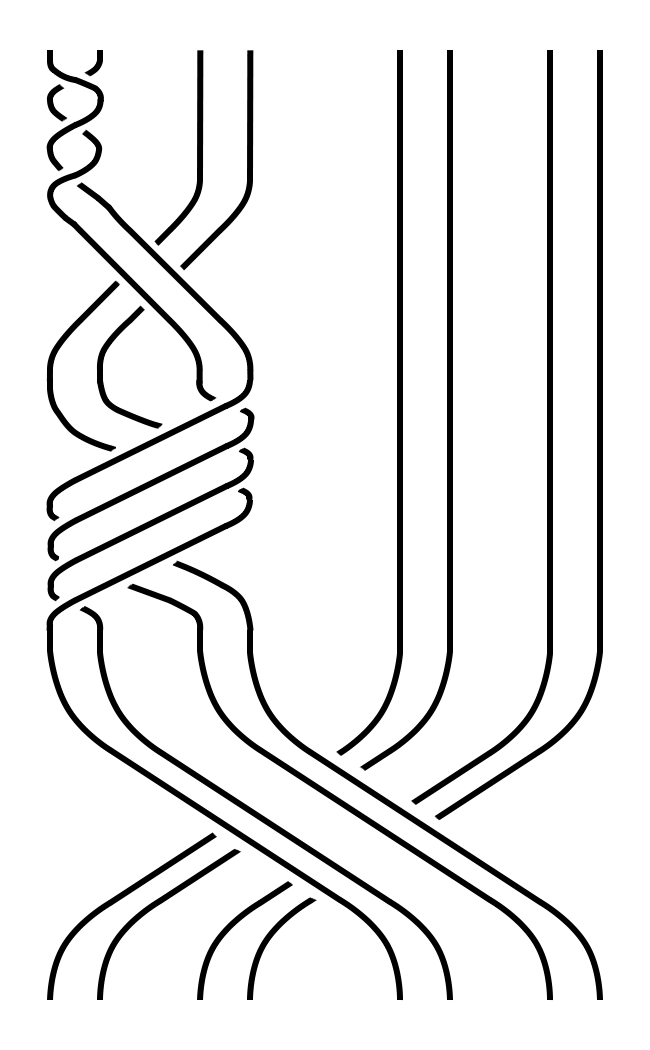}
\end{center}
\caption{Multiple levels of unknotted braids.}\ulabel{unknotbraids}{Figure}
\end{figure}

To compute the fundamental group of the solenoid complement, we
first compute the fundamental group of a solid torus minus the chosen
closed braid $b(n)=\prod\sigma_i$.
As in the previous section, we can present the fundamental group of this piece as
$G(n)=\langle t, x_1, x_2,\dots,x_n ~|~ R\,\rangle$, where $x_i$ represents the loop going around the
$i^{\text{th}}$ puncture once, $t$ represents the longitude of the solid torus.
The relators $R$ are determined by the braid $b(n)$ as follows: for $i>1$, we have
$t^{-1}x_i t = x_{i-1}$, together with the relation
$t^{-1}x_1 t = x_1 x_2\dots x_{n-1}x_n x_{n-1}^{-1}x_{n-2}^{-1}\dots x_1^{-1}$.
Note that if we kill $t$ (i.e. set $t=e$), then these relators become $x_i=x_{i-1}$, and thus the
quotient $G(n)/\hspace{-2pt}\ll t\gg\, =\langle x_1\rangle = \mathbb
Z$. This should be expected, as this is equivalent to gluing in a
solid torus to get $S^3$ minus the braid $b(n)$, which was the
unknot. In the following, it will be convenient to set $x_0=\prod
x_i$, which satisfies the relation $t^{-1} x_0 t= x_0$.

Now we consider the Seifert Van Kampen relations.  As we only are looking at two levels for the moment, we will denote the elements of
the inner piece with `primes,' (as in $x_k'$ compared to $x_k$ for the outer piece), to avoid the more cumbersome notation $x_{(i)k}$ used previously for the complete presentation of the solenoid complement fundamental group.
Then the relations determined by the meridian and longitude of the intersection torus are
$x_1=x_0'=\prod x_k'$, and $t'=t^{n_1} w(x_k)$, where $w$ is some
word in the $x_k$'s.

This is where the issue of twisting comes into play.  By considering
a diagram, one can see that $w=x_0$ does work (it's useful to
remember that $x_0$ commutes with $t$). While not every other word
in $x_k$ will work, we can match this longitude of the intersection
torus with any longitude $t'(x_0')^m$ of the inner torus, perhaps
wrapping around more (or fewer) times than we think we should. As
$x_0'=x_1$, we see that we can append any number of $x_1$'s at the
end of $w$. In order to get the unknot at this second level, we
choose $w=x_0 x_1^{-n_1}$. Thus when we set $t=e$, we get that
$x_i=x_j$, so that $x_0=x_1^{n_1}$ and $w=e$. Then $t'=e$, and we
similarly have $x_i'=x_j'$. Also, $x_1=x_0'=\prod
x_i'=(x_1')^{n_2}$. Thus the fundamental group is generated by
$x_1'$, where the generator $x_1$ from the previous step satisfies
$x_1=(x_1')^{n_2}$. Therefore the second stage is unknotted, being a
knot with fundamental group $\mathbb Z$, and the fundamental group from the first stage embeds via the map $\Z\to\Z:1\mapsto n_2$.

We can continue this process of inserting unknotted solid tori
$T_i$, and we get that each fundamental group
$\pi_1(S^3-T_i)$ is cyclic. If we call the generators from two
consecutive stages $a,a'$, then we have $a=(a')^{n_k}$. Thus we see
that the fundamental group of the complement of the unknotted
solenoid $\Sigma(n_i)$ is the direct limit $G(n_i)=\dirlim (\mathbb Z,
f_i:1\mapsto n_i)$.  This group can be described more directly as follows, since we are allowed to divide by any of the $n_i$:
$$G\big(\{n_i\}\big)=\left\{\frac pq\in\mathbb Q ~\middle|~ q =
\prod_{i=1}^k n_i \text{ for some } k \right\}.
$$

The element 1 in this group represents the meridian loop of the
initial torus $T_0$ in the construction, and $1/n_1$ represents the meridian
loop of the torus $T_1$, or going around one strand of the braid in
the first level $(T_0-T_1)$.  At each stage we can divide by $n_i$, and in
general $1/\left(\Pi^k n_i\right)$ represents a loop going around a
strand of the braid on the $k^{th}$ level $(T_{k-1}-T_{k})$, or equivalently, a
meridian of the torus $T_k$. Since any loop can only come to
within a finite (non-zero) distance of the solenoid, this gives us
all loops in the fundamental group.


\begin{ex}[Dyadic Solenoid]\ulabel{dyadicunknot}{Example}
If $\Sigma $ is the dyadic solenoid with defining sequence $n_i\equiv2$, then this
tells us that the fundamental group is the direct limit
$\dirlim(\Z,2)$, which is just the dyadic rationals $G=\{p/2^k\}$.

This can also be seen from the presentation as given in \ref{dyadicpresentation}.  For each level being unknotted we get the following presentation, where we have filled in the relations $R$ from the presentation earlier.
$$\pi_1=\Big\langle s_i,z_i ~\Big|~ [s_i,z_i]=e,~ {s_i}^{-1}z_{i+1}s_i={z_{i+1}}^{-1}z_i,~ s_{i+1}={s_i}^2z_i{z_{i+1}}^{-2},~ s_0=e \Big\rangle
$$
Notice that on any level, if $s_i=e$, then ${z_{i+1}}^2=z_i$, and then $s_{i+1}=s_i^2=e$.
Thus this group becomes
$\langle z_i ~|~ z_{i+1}^2=z_i \rangle = \dirlim (\mathbb Z,2).
$
\end{ex}

Similarly, for an $n$-adic solenoid, where $n_i\equiv
n$, we get the (non-complete) $n$-adic rationals $\{p/q ~|~
q=n^k\}$. In general, the group $G$ can be any non-trivial subgroup of $\Q$.  We characterize the subgroups of $\Q$ in \ref{subgroups of Q}; we restate the lemma here for convenience, and give a proof in the appendix.  We then describe how to achieve those as the fundamental group of a specific solenoid complement.

Note that for additive subgroups of $\Q$, multiplication by a constant is an isomorphism, so that we may assume that the subgroup contains 1.  In the lemma, the numbers $k_i$ represent the number of times (plus 1) that the prime $p_i$ is allowed to appear in the denominators of the subgroup elements.
\\

\restate{subgroups of Q} {\emph{%
Let $\{k_i\}$ be a sequence in $\mathbb N\cup\infty$.  Define }
$$Q\big(\{k_i\}\big)=\left\{\frac pq\in\mathbb Q ~\middle|~ q =
\prod_{i=1}^m
p_i^{n_i} \quad\text{ for some } n_i< k_i \text{ and some } m 
\right\}
$$
\emph{where $p_i$ denotes the $i^{th}$ prime number.}

\emph{
Then $Q\big(\{k_i\}\big)$ is a subgroup of $\mathbb Q$ containing 1. Furthermore,
every subgroup $G\leq\mathbb Q$ containing 1 is equal to $Q\big(\{k_i\}\big)$
for some sequence $\{k_i\}$.}
}\\

For a solenoid with defining sequence $\{n_i\}$, the fundamental group is $G\big(\{n_i\}\big)$ as mentioned above, which can also be described as the subgroup $Q\big(\{k_j\}\big)$ from \ref{subgroups of Q} by setting $k_j$ to be one more than the cumulative number of times the $j^{th}$ prime occurs as a factor in the sequence $\{n_i\}$ (where $k_j$ might be infinite).  For example, if the sequence $\{n_i\}$ begins with $2,4,6,8,5,\dots$, where the tail of the sequence consists of odd numbers, then for $i=1$, we have $p_i=2$, and $k_i=1+(1+2+1+3+0)=8$, as we add one to the sum of the powers of 2 that appear in the $n_i$.

From this, it is now easy to see that given any subgroup
$Q(k_j)\leq\mathbb Q$, there is a solenoid $\Sigma$ and an unknotted
embedding into $S^3$ such that $\pi_1(S^3-\Sigma)=Q(k_j)$. The
defining sequence $\{n_i\}$ can be chosen in various ways, but the homeomorphism type of the solenoid
described is uniquely determined.  One construction that will always
work is as follows:
$$n_i=\prod_{j=1}^i p_j^{m_{ij}},\quad\text{where } m_{ij}=1 \text{ if } i-j<k_j-1
\text{ and 0 otherwise}.
$$

This construction may have some $n_i=1$, and these may be removed if
the tail of the sequence $n_i$ is not identically 1.  In the case where the $n_i$ are eventually 1,
the subgroup $Q\big(\{k_j\}\big)$ is cyclic ($\cong \mathbb Z$), and the
required solenoid is the circle $S^1$ (with $n_i\equiv1$).  The circle is not always considered a solenoid, being trivially so.  If not considering $S^1$ to be a solenoid, then any subgroup of $\Q$ that is neither $\{0\}$ nor $\Z$ may be obtained as the fundamental group of a solenoid complement.

As mentioned earlier, any finite segment of $\{n_i\}$ does not change
the solenoid $\Sigma(n_i)$. It also does not change the fundamental
group of the unknotted complement. If $G\big(\{n_i\}\big)$ is the group for the
sequence $\{n_i\}$, and $G\big(\{n_i\},k\big)$ is the group where we start the
sequence at $i=k$, then we have the isomorphism $\varphi:G\big(\{n_i\}\big)\to
G\big(\{n_i\},k\big)$ defined by $\varphi(x)=x\cdot\left(\prod_{i=1}^{k-1}n_i \right)$.

Also notice that any reordering of $\{n_i\}$, or replacing a term
$n_j$ by a sequence of its prime factorization, will also not change
the group (here the isomorphism is the identity map).

In the previous discussion, we considered a particular unknotted
embedding, based on a choice of braids $b(n)$ that give the unknot.
There are obviously many other choices of braids that give the
unknot; for example, the combined braid from the first and second
stages described above is an unknot on $n_1n_2$ strands, which
differs from our chosen $b(n_1n_2)$.  However, the results stated
above still hold.  Given any unknotted embedding, we have
$\pi_1(S^3-\Sigma)=\dirlim \pi_1(S^3-T_i)=\dirlim \mathbb Z$, as each $T_i$ is unknotted.
The bonding maps are still $f_i:1\mapsto n_i$, resulting in the same fundamental group.

While, for a given solenoid, the fundamental group of the complement
of any unknotted embedding is the same, one may ask the following
\begin{que}
Are all unknotted embeddings of a given solenoid equivalent?
\end{que}
Here we might take equivalent to mean that there is an ambient
isotopy, or perhaps ambient homeomorphism (possibly orientation
preserving) between the two embeddings, or perhaps just requiring that the complements in $S^3$ be homeomorphic.

As noted above, changing any finite segment of the $n_i$'s does not change the fundamental group, but additionally in this case the complements are homeomorphic, as we may `unwind' the first $k$ levels of unknotted tori, with an ambient isotopy.  Similarly, any finite reordering of the $n_i$ or replacement by factorizations or products will also give ambient isotopic embeddings.  Additionally, changes in the unknotted embeddings chosen at finitely many levels will also give ambient isotopic embeddings.  To ensure equivalent embeddings, we only need to require that there are only finitely many changes in the sequence $\{n_i\}$ and in the unknotted embeddings.  If there are infinitely many of these changes made, it is no longer clear whether this changes the homeomorphism type of the complement.

It seems likely that infinitely many changes will result in different complements, or at least embeddings that are not ambient isotopic, and that there should be uncountably many inequivalent unknotted embeddings for any solenoid.
\begin{conj}\ulabel{unknottedconj}{Conjecture}
For every solenoid, there are uncountably many inequivalent unknotted embeddings in $S^3$.
\end{conj}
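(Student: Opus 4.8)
The plan is to adapt the geometric strategy that proves the knotted case (\ref{thm-geometry}) --- a canonical toral decomposition together with Mostow--Prasad rigidity --- to the open manifold $M_\epsilon:=S^3-\Sigma_\epsilon$, after first \emph{regrouping the defining tower so that genuinely different unknotted braids are available.} Since $\bigcap_i T_i=\bigcap_i T_{2i}$, passing to a subsequence of the tori does not change the complement at all, so for any solenoid other than $S^1$ we may build our embeddings on a tower whose strand numbers satisfy $N_i\geq 3$ (group the primes of the defining sequence into blocks of product $\geq 3$). This reduction is essential: on two strands the only unknotted braids are $\sigma_1^{\pm1}$, whose complementary piece $T_{i-1}-\interior(T_i)$ carries no hyperbolic structure, so no local geometric invariant is available; on many strands, by contrast, there are unknotted braids whose pieces are hyperbolic.

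Next I would construct a family indexed by $\epsilon=(\epsilon_i)\in\{0,1\}^{\mathbb N}$. At level $i$, choose $\beta_i$ from a two-element menu $\{\beta_i^{(0)},\beta_i^{(1)}\}$ of braids on $N_i$ strands, each of whose closure in the standardly embedded solid torus is the unknot (so that $T_i$ is unknotted in $S^3$, by the straightening argument noted before \ref{unknotbraids}), and such that the two compact pieces $P_i^{(0)},P_i^{(1)}:=T_{i-1}-\interior(T_i)$ are hyperbolic with distinct volumes $v_i^{(0)}\neq v_i^{(1)}$. The sub-claim to verify here is the existence, on $N$ strands for $N$ large, of unknotted braids whose two-component link (closure $\cup$ axis) has hyperbolic complement; I expect this is routine by starting from a hyperbolic unknotted braid and applying a local tangle replacement that preserves unknottedness and hyperbolicity while changing volume, and moreover one can arrange the volumes to grow with $i$ so that the unordered pairs $\{v_i^{(0)},v_i^{(1)}\}$ are pairwise distinct.

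The distinguishing argument then runs as follows. The tori $\partial T_i$ cut $M_\epsilon$ into the pieces $P_i^{(\epsilon_i)}$ together with the outermost solid torus $S^3-T_0$; these tori are incompressible and non-boundary-parallel exactly because of the linking hypothesis $lk(J,K)=N_i\neq0$ underlying \ref{jims-lemma} and \ref{directlimit}, so they form the essential toral decomposition of the open manifold and the hyperbolic $P_i^{(\epsilon_i)}$ are its geometric pieces. Given a homeomorphism $\Phi:M_\epsilon\to M_{\epsilon'}$, one shows it is proper and matches the solenoid ends, and then that, up to proper isotopy, it carries $\partial T_i$ to $\partial T'_{i+c}$ for a fixed shift $c$ and all large $i$ (the nesting linearly orders the cofinal tori, so a homeomorphism of ends respects this order up to finitely much). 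Then $\Phi$ restricts to homeomorphisms $P_i^{(\epsilon_i)}\to P_{i+c}^{(\epsilon'_{i+c})}$, whence Mostow--Prasad rigidity gives $v_i^{(\epsilon_i)}=v_{i+c}^{(\epsilon'_{i+c})}$ for all large $i$; distinctness of the pairs forces $c=0$ and then $\epsilon_i=\epsilon'_i$ eventually. As $\{0,1\}^{\mathbb N}$ modulo eventual equality is uncountable (and eventual shifts correspond to the finite modifications already understood to preserve the complement), the $\Sigma_\epsilon$ fall into uncountably many homeomorphism classes of complement, a fortiori uncountably many classes under the weaker equivalences.

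The single serious obstacle is promoting the compact decomposition-and-rigidity machinery to the non-compact $M_\epsilon$, whose end is the wild solenoid rather than a tame product end. Concretely one must establish a canonical-torus theorem at the solenoid end --- that the $\partial T_i$ are, up to proper isotopy, the only essential tori cofinal in the end --- and then upgrade an arbitrary homeomorphism to one matching these tori asymptotically. I would attack the first by an innermost-disk-and-annulus analysis of the intersection of a general essential torus with the $\partial T_j$, precisely in the cut-and-paste style of \ref{jims-lemma}, and the second by a properness/exhaustion argument putting $\Phi$ into ``staircase'' position with respect to the two towers. It is this \emph{end-rigidity}, rather than the hyperbolic geometry of the pieces, that I expect to be delicate, and that presumably accounts for the statement being left as a conjecture.
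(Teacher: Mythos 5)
This statement is left as a \emph{conjecture} in the paper --- the authors offer no proof of it, and indeed they remark just above it that once infinitely many changes are made to an unknotted embedding ``it is no longer clear whether this changes the homeomorphism type of the complement.'' So there is no proof to compare against; your proposal must stand on its own, and it does not: it contains a fatal gap.

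The gap is your claim that the tori $\partial T_i$ ``are incompressible and non-boundary-parallel exactly because of the linking hypothesis underlying \ref{jims-lemma} and \ref{directlimit}.'' This is false in the unknotted setting, and it is the whole point. Since each $T_i$ is unknotted in $S^3$, the outside region $S^3-\interior(T_i)$ is a solid torus contained in $S^3-\Sigma_\epsilon$, and its meridian disk is a compressing disk for $\partial T_i$; so \emph{every} decomposition torus in your construction is compressible in $M_\epsilon$. Worse, by the paper's \ref{unknotted} the complement of any unknotted embedding has $\pi_1(M_\epsilon)$ isomorphic to a subgroup of $\Q$, which is locally cyclic and hence contains no $\Z\times\Z$; consequently $M_\epsilon$ contains \emph{no} incompressible tori whatsoever, and the ``canonical-torus theorem at the end'' you hope to prove is vacuously unavailable. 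Note that \ref{jims-lemma} and \ref{directlimit} give injectivity of the inclusions $\pi_1(S^3-T_i)\to\pi_1(S^3-\Sigma)$, not $\pi_1$-injectivity of the tori $\partial T_i$; in the unknotted case $\pi_1(S^3-T_i)\cong\Z$, not $\Z^2$. With the cutting tori compressible, the decomposition into your hyperbolic pieces $P_i^{(\epsilon_i)}$ is not canonical, so Mostow--Prasad rigidity has nothing to act on: a homeomorphism $M_\epsilon\to M_{\epsilon'}$ is under no obligation to respect these tori even asymptotically. The loss of information is concrete: since each $T_i$ is unknotted, every finite stage $S^3-\interior(T_i)$ of the exhaustion is a solid torus \emph{regardless of which unknotted braids were chosen}, so no compact stage remembers $\epsilon$ at all --- in sharp contrast to the knotted case, where the stages are satellite knot exteriors whose JSJ pieces do remember the braiding.

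This also explains why your plan to ``adapt'' the proof of \ref{thm-geometry} cannot work as stated: in that proof the initial torus $T_0$ is chosen \emph{knotted}, which makes every subsequent $T_i$ a nontrivial satellite, hence every $\partial T_i$ incompressible from the outside (and the nonzero winding number handles the inside); that is exactly what licenses the JSJ-plus-rigidity argument there. Knotting $T_0$ is not an incidental convenience but the load-bearing hypothesis, and it is precisely the hypothesis that the definition of an unknotted embedding forbids. Distinguishing unknotted complements therefore requires an invariant that is not built from essential tori or the geometry of pieces --- presumably some finer invariant of the end or of the nesting pattern of the exhausting solid tori --- and the absence of such a tool is evidently why the authors state this as \ref{unknottedconj} rather than as a theorem.
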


We summarize the results of this section in the following theorem:

\begin{thm}\ulabel{unknotted}{Theorem}
For any solenoid $\Sigma$, there exists an embedding
$\Sigma \subset S^3$ such that $\pi_{1}(S^3 - \Sigma)$ is
Abelian, and in fact a subgroup of $\mathbb Q$.

Furthermore, for every nontrivial subgroup $G\leq (\mathbb Q,{+})$,
there exists a solenoid $\Sigma$ and an embedding $\Sigma \subset S^3$
such that $\pi_{1}(S^3 - \Sigma)\cong G$.
\end{thm}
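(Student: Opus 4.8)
The plan is to assemble the pieces already established in this section. For the first statement I would invoke the explicit unknotted embedding constructed above: given any defining sequence $\{n_i\}$ for $\Sigma$, the corrective-twist construction (adding $(n-1)$ full twists to each braid $b(n)$) produces nested solid tori $T_0\supset T_1\supset\cdots$ with $\bigcap T_i=\Sigma$ in which every $T_i$ is unknotted. Since the complement of an unknotted solid torus is again a solid torus, $\pi_1(S^3-T_i)\cong\Z$ for each $i$. The linking hypothesis of \ref{directlimit} holds because the core of $T_{i+1}$ wraps $n_{i+1}\neq0$ times around $T_i$, so that lemma gives $\pi_1(S^3-\Sigma)=\dirlim_i\pi_1(S^3-T_i)$ with all bonding maps injective. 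The computation above---via the relation $a=(a')^{n_{i+1}}$ between consecutive meridian generators---identifies each bonding map $\Z\to\Z$ as $1\mapsto n_{i+1}$. Hence $\pi_1(S^3-\Sigma)\cong\dirlim(\Z,\,1\mapsto n_i)=G(\{n_i\})$, the subgroup of $(\Q,+)$ consisting of fractions whose denominator is a partial product $n_1n_2\cdots n_k$; in particular it is Abelian and contained in $\Q$. This settles the first paragraph of the statement.

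For the second statement I would begin with an arbitrary nontrivial $G\le(\Q,+)$. Since multiplication by a nonzero rational is an automorphism of $\Q$ carrying $G$ to an isomorphic subgroup, I may assume without loss of generality that $1\in G$. By the restated \ref{subgroups of Q} there is a sequence $\{k_j\}$ in $\N\cup\{\infty\}$ with $G=Q(\{k_j\})$, where $k_j-1$ records how often the $j$th prime $p_j$ is permitted in denominators. It then remains to realize $Q(\{k_j\})$ as $G(\{n_i\})$ for a suitable defining sequence. I would use the explicit recipe $n_i=\prod_{j=1}^i p_j^{m_{ij}}$ with $m_{ij}=1$ when $i-j<k_j-1$ and $0$ otherwise, and check that the cumulative exponent of $p_j$ appearing across the $n_i$ equals $k_j-1$ (or grows without bound when $k_j=\infty$); this gives $G(\{n_i\})=Q(\{k_j\})=G$. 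The unknotted embedding of $\Sigma(\{n_i\})$ furnished by the first part then has the prescribed fundamental group.

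It remains to handle the degenerate cases. Any $n_i=1$ may be deleted without changing the solenoid or its group; if the sequence is eventually $1$ then $G\cong\Z$ and the relevant ``solenoid'' is the circle $S^1$ (the case $n_i\equiv1$), which one may decline to count as a genuine solenoid. As $G$ was taken nontrivial, the only subgroup lost by insisting on an honest solenoid is $G\cong\Z$, and every other nontrivial subgroup of $\Q$ is realized.

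The main obstacle here is conceptual rather than computational: the genuine content lies not in this bookkeeping but in the two facts imported from earlier in the section---that every solenoid admits an unknotted embedding (the corrective-twist braid construction) and that for such an embedding the level-$i$ meridian is the $n_{i+1}$th power of the level-$(i+1)$ meridian, yielding the bonding map $1\mapsto n_{i+1}$. Granting those, together with \ref{directlimit} and \ref{subgroups of Q}, the theorem follows by elementary arithmetic of prime factorizations.
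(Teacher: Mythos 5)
Your proposal is correct and follows essentially the same route as the paper: the theorem there is stated explicitly as a summary of the section, whose content is exactly what you assemble --- the corrective-twist unknotted embedding, injectivity of the bonding maps via \ref{directlimit}, identification of each map $\Z\to\Z$ as multiplication by $n_i$ so that $\pi_1(S^3-\Sigma)=G(\{n_i\})\leq\Q$, and then realization of an arbitrary nontrivial subgroup via \ref{subgroups of Q} and the explicit recipe for $\{n_i\}$, with the circle/$\Z$ case handled just as the paper does. No gaps to report.
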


\section{Knotted Solenoids}\ulabel{sec-knotted}{section}

In the previous section, we took care to ensure that each torus in
the nested intersection construction was unknotted in $S^3$. First,
we used a braid $b(n)$ that represents the unknot, and then we took
care how we glued in the next stage, with respect to twisting.
Relaxing these conditions, we will consider any braid $b$ on $n$
strands that is transitive on the strands; transitivity gives us a
knot instead of a link.

Again, the fundamental group of a solid torus minus this closed braid will have the form
$G(b)=\langle t, x_1,\dots, x_n ~|~ R \rangle$. The relators in $R$ are of the form $t^{-1}x_it=w_i$, where the word $w_i$ can be determined directly from the braid.
We only mention here that if the braid $b$ sends strand $i$ to strand $j$,
then the corresponding relator has the form $t^{-1}x_it=g^{-1}x_jg$,
where $g$ is some word in the $x_k$'s, dependent on the braiding.  Then due to the transitivity, we see that after Abelianization, the relators give $x_i=x_j$ for all $i,j$.

To connect two such tori, we need the extra relations
$x_1=x_0'=\prod x_i'$, and $t'=t^{n_1}w(x_i)$.  By careful
consideration of a braid diagram, one can determine a suitable word
$w_b$ for a given braid.  Again, we may allow $w=w_b x_1^k$ for any
$k$ (since we are not worried about extra twisting anymore).

After Abelianization, these relations become $x_1=(x_1')^{n_2}$, and
$t'=t^{n_1}w(x_i)$. At each level, we get a $\mathbb Z$ generated by
$x_1'$, and while $t'$ might not equal zero, it 
can be written as a word in $x_1'$ as the previous $t$ could be written as a word in $x_1$.  We note here that we can always take the first solid torus $T_0$ to be standardly embedded, so that the longitude $t_0=e$. This follows from a theorem of Alexander \cite{alexander}, which states that every knot (or link) can be represented as a closed braid.  Then the maps from $\mathbb Z \to \mathbb Z$ are again multiplication by $n_i$.  Thus the Abelianization of all these groups depends only on the solenoid, not the embedding.

The preceding fact is actually a simple consequence of Alexander duality:
\begin{thm}[Alexander Duality]\ulabel{alexander}{Theorem}
For a compact set $K\subset S^n$, $H_i(S^n-K)\cong \check{H}^{n-i-1}(K)$.
\end{thm}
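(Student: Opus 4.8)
The plan is to realize both sides as limits over a neighborhood system of $K$ and reduce the statement to Poincar\'e--Lefschetz duality on compact manifold neighborhoods, which is precisely the mechanism that forces \v{C}ech cohomology to appear on the right. Recall that $\check H^*(K)=\dirlim_{U\supset K} H^*(U)$, the direct limit of the ordinary cohomology of open neighborhoods $U$ of $K$ directed by reverse inclusion. Since $K\subset S^n$, I would restrict to a cofinal system of neighborhoods $N$ that are compact codimension-$0$ submanifolds with boundary (regular neighborhoods); for these the collar theorem gives $H^*(N)\cong H^*(\interior N)$, so the limit over such $N$ still computes $\check H^*(K)$.

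For the homology side, write $C=S^n-K$. Each point of $C$ is separated from the compact set $K$ by some regular neighborhood $N$, so $C=\bigcup_N (S^n-N)$ is an increasing directed union of open sets, and by continuity of singular homology this gives $H_i(C)\cong \dirlim_N H_i(S^n-N)$. For fixed $N$ set $M=\overline{S^n-N}$, a compact manifold with $\partial M=\partial N$ and $\interior M=S^n-N$; the collar theorem makes $\interior M\hookrightarrow M$ a homotopy equivalence, so $H_i(S^n-N)\cong H_i(M)$.

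Next I would apply Lefschetz duality to $M$ to obtain $H_i(M)\cong H^{n-i}(M,\partial M)$. Since $S^n=M\cup N$ with $M\cap N=\partial M$, excising the interior of $N$ (using a collar) gives $H^{n-i}(M,\partial M)\cong H^{n-i}(S^n,N)$. Feeding this into the long exact sequence of the pair $(S^n,N)$ and using $\tilde H^*(S^n)=0$ away from dimension $n$ collapses the sequence to $H^{n-i}(S^n,N)\cong \tilde H^{n-i-1}(N)$. Chaining the isomorphisms yields $H_i(S^n-N)\cong \tilde H^{n-i-1}(N)\cong \tilde H^{n-i-1}(\interior N)$, and passing to the direct limit over the cofinal system of regular neighborhoods produces $H_i(S^n-K)\cong \dirlim_U \tilde H^{n-i-1}(U)=\check H^{n-i-1}(K)$.

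The main obstacle is constructing a cofinal family of regular (manifold) neighborhoods of a possibly \emph{wild} compact set --- and $K$ here is a solenoid, which is neither locally connected nor locally contractible, so no naive tubular neighborhood exists. This is handled by working in a fine triangulation of $S^n$ and taking closed simplicial neighborhoods, whose interiors shrink to $K$ and which are genuine compact PL manifolds with boundary; cofinality is exactly what legitimizes replacing $\dirlim_N H^*(N)$ by the \v{C}ech group, and it is why \v{C}ech rather than singular cohomology is unavoidable for such $K$. The only other point to watch is the reduced-versus-unreduced discrepancy at the extreme dimensions $i=0,\,n-1$, where the collapse of the pair sequence needs $\tilde H^*$ rather than $H^*$, so the statement should be read with reduced groups there. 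I would also note the slicker alternative: viewing $C=S^n-K$ as an open $n$-manifold, Poincar\'e duality gives $H_i(C)\cong H^{n-i}_c(C)$, and the compactly supported long exact sequence of the open--closed decomposition of $S^n$ into $C$ and $K$, together with the vanishing of $H^*(S^n)$ in the middle range, yields the result directly --- at the cost of invoking compactly supported cohomology and sheaf theory.
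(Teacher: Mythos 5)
The paper does not prove \ref{alexander} at all: Alexander duality is quoted as a classical theorem and used only to conclude that $H_1(S^3-\Sigma)\cong\check{H}^1(\Sigma)$ is independent of the embedding. So there is no proof in the paper to compare against; judged on its own terms, your proposal is an essentially correct sketch of the standard textbook argument --- tautness ($\check{H}^*(K)=\dirlim_U H^*(U)$ for compact $K$ in an ANR), a cofinal family of compact manifold neighborhoods, Lefschetz duality on the exterior $M=\overline{S^n-N}$, excision, the long exact sequence of $(S^n,N)$, and passage to the limit --- and the alternative you mention via Poincar\'e duality with compactly supported cohomology is also legitimate.

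Two points deserve attention. First, your construction of the cofinal family is misstated: the union of the closed simplices of a fine triangulation that meet $K$ is a compact polyhedron containing $K$ in its interior, but it need \emph{not} be a manifold with boundary (two closed stars can meet in a single vertex, for instance); this matters precisely because $K$ is wild here. The standard repair is regular neighborhood theory: pass to a derived (regular) neighborhood of that polyhedron, which \emph{is} a compact PL manifold with boundary, and such neighborhoods still shrink to $K$ and form a cofinal system. With that correction the rest of your argument goes through. Second, your caveat about reduced groups is genuine, and it is in fact an imprecision in the paper's statement rather than a defect of your proof: with unreduced groups the duality fails at $i=0$ (take $K$ a single point, where $H_0(S^n-K)\cong\Z$ while $\check{H}^{n-1}(K)=0$); the correct statement pairs reduced homology with reduced \v{C}ech cohomology. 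This does not affect the paper's application, which uses only $i=1$, $n=3$, where reduced and unreduced groups coincide.
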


In our setting, this tells us that the first homology, or the Abelianization of the fundamental group, of the complement of an embedded solenoid is equal to the first \v{C}ech cohomology of the solenoid, which is independent of the embedding: $(\pi_1)_{\text{Ab}}=H_1(S^3-\Sigma)=\check{H}^1(\Sigma)$.  Of course the \v{C}ech cohomology of $\Sigma(n_i)$ must then be the group $G(n_i)$ as discussed in the previous section, since in that case the fundamental group is the first homology group, being Abelian.  That this group is in fact the \v{C}ech cohomology of the solenoid is shown/discussed in \cite{mccord}.

\begin{ex}[Dyadic Solenoid]\ulabel{dyadicknotted}{Example}
Again, consider the dyadic solenoid with $n_i\equiv2$.  On each level we will use the braid ${\sigma_1}^3$, which gives the trefoil knot.  In this case the presentation for the fundamental group becomes:
$$\pi_1=\Big\langle s_i,z_i ~\Big|~ [s_i,z_i]=e,~ {s_i}^{-1}z_{i+1}s_i={z_{i}}^{-1}{z_{i+1}}^{-1}{z_i}^2,~ s_{i+1}={s_i}^2{z_i}^3{z_{i+1}}^{-6},~ s_0=e \Big\rangle
$$

Note that if  we Abelianize, then $z_{i+1}^2=z_i$, and then $s_{i+1}=s_i^2=e$ as before.  This gives us that $H_1=(\pi_1)_{\text{Ab}}$ is the dyadic rationals.

However, this fundamental group is non-Abelian.  This follows from \ref{directlimit} and the fact that the trefoil group is non-Abelian.  This can also be seen directly from the presentation, as the fundamental group maps onto the infinite alternating group $A_{\infty}$.  To see this, map each generator $z_i$ to the 3-cycle $\big(i~(i+1)~(i+2)\big)$, and map each $s_i$ to the identity.  It is straightforward to check that the relations are satisfied in $A_\infty$; the only one of these that is not immediate follows since consecutive 3-cycles satisfy the relation  $z_{i+1}={z_{i}}^{-1}{z_{i+1}}^{-1}{z_i}^2$.
\end{ex}

While the homology of a solenoid complement only depends on the
solenoid, the fundamental groups can be quite different.  However,
it is still difficult to tell them apart.  We have given a way to
present these groups, but our presentations are infinite, which
makes it difficult to determine when two groups are isomorphic; in fact it is even difficult to tell when two finite presentations give isomorphic groups.
For instance, if we take a dyadic solenoid with $n_i\equiv2$, at any level we may either use the unknotted embedding from \ref{dyadicunknot}, or the trefoil embedding from \ref{dyadicknotted}.  The presentation will look similar to those in the examples, using the relations from one or the other at different levels $i$ depending on which embedding was chosen.  While it seems very likely that these give different fundamental groups, it is hard to prove that for these given infinite presentations, especially as they have isomorphic Abelianizations (see \ref{alexander}).

However, despite these difficulties, we can tell some of these embeddings apart via the fundamental group.
\ref{directlimit} tells us that the fundamental groups of the various stages inject into the fundamental group of the entire complement. A standard result from knot theory states that the fundamental group of the complement of any knot other than the unknot is non-Abelian. Thus if there is any knotting in our embedding of the solenoid, $\pi_1(S^3-\Sigma)$ will be non-Abelian, in contrast to the unknotted embeddings which always have Abelian fundamental groups.

There are many knotted embeddings of any solenoid, which seemingly should all be different.  As fundamental groups determine knots (up to chirality), it seems that if there is any substantial difference in the knottings, the fundamental groups should differ.  Unfortunately, it is hard to show
this given our infinite presentations.

We summarize the results of this section in the following theorem and conjecture.

\begin{thm}\ulabel{knotted}{Theorem}
For every solenoid $\Sigma$, there are knotted embeddings
$\Sigma \subset S^3$, and such embeddings have $\pi_{1}(S^3 - \Sigma)$ non-Abelian.
These embeddings are inequivalent to unknotted embeddings, whose complements have Abelian fundamental groups.
\end{thm}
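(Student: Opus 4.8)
The plan is to exhibit a knotted embedding concretely as a nested intersection of solid tori, push a nontrivial knot group into $\pi_1(S^3-\Sigma)$ via \ref{directlimit}, and then separate knotted from unknotted embeddings by the Abelian/non-Abelian dichotomy supplied by \ref{unknotted}. For the construction, fix a defining sequence $\{n_i\}$ for $\Sigma$; since $\Sigma$ is a genuine solenoid rather than a circle, infinitely many indices satisfy $n_i\geq 2$. It suffices to knot a single level, so I would take $T_0$ standardly embedded, let $j$ be the least index with $n_j\geq 2$, use the unknotting braids $b(n_i)$ from \ref{sec-unknotted} at every level $i<j$, and at level $j$ embed $T_j$ as the closure of a transitive braid on $n_j$ strands whose closure is a nontrivial knot. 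The remaining levels may be filled in by any valid transitive braids.

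The crux of the construction is the sub-claim that for every $n\geq 2$ there is a transitive braid on $n$ strands whose closure is a nontrivial knot. I would prove this using torus-knot braids: the braid $(\sigma_1\sigma_2\cdots\sigma_{n-1})^{n+1}$ closes up to the $(n,n+1)$ torus knot, which is nontrivial for $n\geq 2$; its underlying permutation is the $(n+1)$-st power of an $n$-cycle, hence again an $n$-cycle since $\gcd(n,n+1)=1$, so the braid is transitive and its closure is a single-component knot. Because all earlier tori are unknotted, $T_{j-1}$ is ambient isotopic to the standard solid torus, so the core of $T_j$ is exactly this braid-closure knot viewed as a knot in $S^3$; thus $T_j$ is a genuinely knotted solid torus and the nested intersection is a knotted embedding of $\Sigma$.

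For the fundamental group, the exterior $S^3-T_j$ is the complement of the core knot of $T_j$, so $\pi_1(S^3-T_j)$ is the group of a nontrivial knot and therefore non-Abelian, by the classical fact that only the unknot has Abelian knot group. The linking hypothesis of \ref{directlimit} holds automatically, since the core of $T_{i+1}$ links the meridian of $\partial T_i$ with linking number $n_{i+1}\geq 1\neq 0$; hence the inclusion-induced map $\pi_1(S^3-T_j)\to\pi_1(S^3-\Sigma)$ is injective. As any group containing a non-Abelian subgroup is itself non-Abelian, $\pi_1(S^3-\Sigma)$ is non-Abelian.

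For inequivalence I would invoke \ref{unknotted}: the complement of every unknotted embedding has Abelian fundamental group, in fact a subgroup of $\Q$. Any equivalence of embeddings---whether an ambient homeomorphism of $S^3$ or merely a homeomorphism of the complements---induces an isomorphism of fundamental groups, and an Abelian group is never isomorphic to a non-Abelian one; so none of the knotted embeddings above is equivalent to any unknotted embedding. The only genuine content beyond bookkeeping, and the step I expect to require the most care, is the sub-claim that a transitive braid with nontrivial closure exists for every strand number $n\geq 2$, together with the verification that keeping the earlier tori unknotted forces the core of $T_j$ to be \emph{precisely} the braid-closure knot in $S^3$ rather than an uncontrolled satellite; everything else follows directly from \ref{directlimit}, \ref{unknotted}, and the non-Abelianness of nontrivial knot groups.
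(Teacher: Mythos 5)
Your proposal is correct and takes essentially the same route as the paper: both arguments inject a nontrivial knot group into $\pi_1(S^3-\Sigma)$ via \ref{directlimit}, invoke the classical fact that only the unknot has Abelian knot group, and then separate knotted from unknotted embeddings using \ref{unknotted}, since a homeomorphism of complements would force an isomorphism between a non-Abelian group and a subgroup of $\Q$. Your explicit torus-knot braid $(\sigma_1\sigma_2\cdots\sigma_{n-1})^{n+1}$ merely makes concrete what the paper leaves generic (any transitive braid with knotted closure, e.g.\ $\sigma_1^3$ in \ref{dyadicknotted}), and the framing issue you rightly flag---unknottedness of the earlier tori alone does \emph{not} pin down the knot type of the level-$j$ closure---is resolved exactly as in \ref{sec-unknotted}: define the embedding of the knotted level by pulling back the standard braid-closure picture under an ambient isotopy carrying $T_{j-1}$ to the standard solid torus (in your construction this is automatic, since the levels below $j$ are trivial concentric $1$-braids).
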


\begin{conj}\ulabel{knottedconj}{Conjecture}
If a solenoid is embedded in two `different' knotted ways, the
fundamental groups of the complements are different.
\end{conj}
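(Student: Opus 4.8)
The plan is to recover, from the abstract group $\pi_1(\Sigma^c)$, the entire sequence of knotting data defining the embedding, and then invoke the classical fact that knot groups determine knots. First one must make `different' precise: since finitely many changes to the braid sequence (finite reordering, finitely many re-choices of braid, replacement of a term by its prime factorization) yield ambient isotopic complements, I would call two braided embeddings of the \emph{same} solenoid \emph{essentially different} when no such finite modification makes their braid sequences coincide, i.e.\ they differ in pattern at infinitely many levels. The goal is then to show that essentially different embeddings of $\Sigma(n_i)$ have non-isomorphic $\pi_1$. Note at the outset that \ref{alexander} forces the two abelianizations to be the identical subgroup of $\Q$, so the argument must extract genuinely non-abelian data.

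Recall from \ref{directlimit} that $G=\pi_1(\Sigma^c)=\bigcup_i G_i$ is a strictly increasing union of injecting knot groups $G_i=\pi_1(S^3-T_i)$. The heart of the strategy is to characterize this filtration group-theoretically, so that any isomorphism $G\cong G'$ must carry the filtration of one to a shift of the filtration of the other. I would build such a characterization from two intrinsic pieces of data. The abelianization map $G\to G_{\mathrm{ab}}\le\Q$ records a notion of \emph{depth}: a level-$i$ meridian maps to $1/\prod_{j\le i}n_j$, and the coarser meridian at level $i$ is a product of $n_{i+1}$ finer meridians at level $i+1$ (the relation $x_{(i-1)1}=\prod_k x_{(i)k}$). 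This pins down which conjugacy classes are meridians of a given depth and hence the ordering of levels. The second piece is peripheral structure: $\partial T_i$ supplies a $\Z^2$ subgroup, and for knot groups the peripheral $\Z^2$'s, together with the JSJ tori arising from the satellite (companion) structure of the tower $T_0\supset\cdots\supset T_i$, are recoverable from the group via Jaco--Shalen--Johannson theory and Waldhausen's theorem that Haken manifolds are determined by their fundamental group with peripheral structure.

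With the filtration matched, the next step is local: a depth-preserving isomorphism restricts, for each $i$, to an isomorphism $G_i\cong G'_{\phi(i)}$ carrying meridian to meridian and longitude to longitude. By Waldhausen the knot complements $S^3-T_i$ and $S^3-T'_{\phi(i)}$ are then homeomorphic, and by Gordon--Luecke the knots $\partial T_i$ and $\partial T'_{\phi(i)}$ agree (up to mirror image). Comparing the companion tori recovered above at consecutive levels identifies the satellite pattern, i.e.\ the braid $b_i$, up to the allowed finite equivalences and chirality. Running this over all $i$ shows the two braid sequences coincide after a finite modification, contradicting essential difference, and so establishes the contrapositive of the conjecture.

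The hard part --- and, I believe, the reason this remains a conjecture --- is the middle step: showing that an \emph{abstract} isomorphism of the direct limits must respect the filtration. Direct limits along injections are extremely flexible, and there is no a priori reason an isomorphism should be compatible with the depth function or carry each $G_i$ into a single $G'_j$; the usual JSJ and Waldhausen machinery applies to each compact piece $S^3-T_i$ but not directly to the infinitely generated limit $G$. One would need to promote the canonicity of meridians, peripheral $\Z^2$'s, and JSJ tori to statements preserved by \emph{every} automorphism of $G$ --- for instance by characterizing each $G_i$ intrinsically as the subgroup generated by all meridians of depth at most $i$, and proving that characterization isomorphism-invariant. Controlling the chirality ambiguity and the finite-modification ambiguity simultaneously across infinitely many levels is a further technical hurdle, and it is conceivable that the right notion of `different' must itself be calibrated so that the statement is true.
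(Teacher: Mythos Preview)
The statement you are addressing is a \emph{conjecture} in the paper, not a theorem; the authors give no proof of it. Immediately before stating it they write that ``it is hard to show this given our infinite presentations,'' and they leave it open. So there is no paper proof to compare your proposal against.

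Your write-up is not a proof but a programme, and you say so yourself: the step asserting that an abstract isomorphism $G\cong G'$ of the direct limits must respect (a shift of) the filtrations $\{G_i\}$, $\{G'_j\}$ is the entire content of the conjecture, and you have not supplied it. Everything downstream of that step (Waldhausen, Gordon--Luecke, recovering the satellite pattern) is standard and would indeed finish the argument, but the filtration-rigidity is exactly what fails to follow from general principles about injective direct limits, and nothing in your sketch forces an isomorphism to send meridians of a given depth to meridians, or peripheral $\Z^2$'s to peripheral $\Z^2$'s. So your proposal correctly locates the gap but does not close it.

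It is worth noting that the paper's own progress on distinguishing embeddings (\ref{thm-geometry}) proceeds along a different axis: rather than arguing with $\pi_1$, the authors work geometrically, choosing embeddings whose defining tori are the canonical JSJ tori of the complement and then invoking Mostow--Prasad rigidity on the hyperbolic pieces. That yields uncountably many non-homeomorphic complements, but it does not establish the conjecture as stated (different fundamental groups for \emph{all} essentially different knotted embeddings), and the authors do not claim that it does.
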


\section{Distinguishing Non-Abelian Complements}\ulabel{sec-geometry}{section}

As discussed in the previous sections, for any solenoid there is an embedding with a non-Abelian fundamental group, which is clearly not equivalent to the Abelian embeddings.  As knots are essentially determined by the fundamental group of their complements (up to an issue of chirality), it seems that unknotted embeddings of a solenoid that are knotted in different ways should give different fundamental groups.  Unfortunately, the result for knots does not easily carry over to solenoids, as the fundamental groups are now ascending unions of knot groups, and it is not clear whether two ascending unions could be equal in the end, yet differ at every finite stage.

In order to distinguish non-Abelian embeddings of a given solenoid, we consider the geometry of the complements.  A standard tool we will use is the JSJ-decomposition, cutting the manifold along incompressible tori.  As the JSJ-decomposition only applies to compact manifolds, we will generalize it to apply to a certain class of embeddings of solenoids.  The following statement is taken from Hatcher's notes on 3-manifolds \cite{hatcher_3mfld}, under the section on Torus Decomposition.

\begin{thm}[JSJ-Decomposition]\ulabel{JSJ-thm}{Theorem}
A compact irreducible orientable 3-manifold has a minimal collection of disjoint incompressible tori such that each component of the complement of the tori is either atoroidal or Seifert fibered.
This minimal collection is unique up to isotopy.
\end{thm}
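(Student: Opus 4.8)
The plan is to prove the two assertions — existence of a collection of tori whose complementary pieces are atoroidal or Seifert fibered, and uniqueness of a minimal such collection — separately, since they rest on rather different tools. For existence, the first ingredient I would invoke is Kneser--Haken finiteness: in a compact irreducible orientable $3$-manifold there is an a priori bound, depending only on a fixed triangulation of $M$, on the number of disjoint, pairwise non-parallel incompressible surfaces one can embed. Specializing to incompressible tori, this guarantees that a \emph{maximal} collection $\mathcal T$ of disjoint, incompressible, pairwise non-parallel tori exists (here one also discards tori that are parallel into $\partial M$). I would then cut $M$ along $\mathcal T$ and examine a single complementary piece $N$. If $N$ is atoroidal there is nothing to prove, so the interesting case is that $N$ contains an essential (incompressible, not boundary-parallel) torus $S$.

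The crux is to show that such an $N$ must be Seifert fibered. Naively one would adjoin $S$ to $\mathcal T$ to contradict maximality, but $S$ could be parallel in $M$ to a torus already in $\mathcal T$; the genuine content is that the only way a maximal collection can fail to cut $M$ into atoroidal pieces is that some piece carries a Seifert fibering. This is exactly the assertion of the characteristic submanifold theorem of Jaco--Shalen and Johannson (equivalently, the Torus Theorem): the maximal Seifert-fibered submanifold of $M$ is well defined, its frontier consists of incompressible tori, and cutting along that frontier leaves atoroidal pieces. I would therefore take the JSJ collection to be this frontier. I expect this to be the main obstacle, since it is the deep heart of the theory and is \emph{not} reachable by elementary cut-and-paste: it depends on the enclosing property of the characteristic submanifold and on the structure of essential annuli and tori in Seifert fibered spaces.

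For uniqueness of a minimal collection, I would argue by a standard exchange argument. Given two minimal collections $\mathcal T$ and $\mathcal T'$, put them in general position and use irreducibility (innermost-disk surgery) to remove every circle of $\mathcal T\cap\mathcal T'$ that bounds a disk, so that the intersection consists only of essential circles on each torus. One then analyzes the components of $\mathcal T'$ cut along $\mathcal T$: each is an incompressible annulus or torus lying inside a complementary piece of $\mathcal T$, which by the existence step is atoroidal or Seifert fibered. In an atoroidal piece such a surface must be boundary-parallel and can be isotoped off; in a Seifert-fibered piece one uses that essential annuli and tori can be isotoped to be vertical in the fibering — with the short list of exceptional horizontal cases handled separately — again allowing $\mathcal T'$ to be pushed onto $\mathcal T$. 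Minimality then forces the two families to agree up to isotopy. The delicate point here is the classification of essential annuli and tori in Seifert fibered spaces, which is what controls how the two collections can be reconciled; everything else is routine incompressible-surface bookkeeping.

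Since this statement is the classical JSJ theorem, in the paper I would not reproduce this argument in full but simply cite it (as the authors do, following Hatcher), and record only the features actually needed downstream: that the decomposition exists, that the torus collection and the Seifert-fibered pieces are canonical, and that uniqueness holds up to isotopy.
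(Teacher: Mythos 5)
The paper does not prove this statement at all: it is quoted verbatim from Hatcher's notes on the torus decomposition and used later (in the proof of \ref{thm-geometry}) as a black box. So your closing recommendation --- cite the theorem and record only the features needed downstream --- is exactly what the authors do, and for this paper that is the right call. Your sketch of the classical argument is also broadly the standard one (Kneser--Haken finiteness for existence, the characteristic submanifold/annulus--torus machinery together with an exchange-and-isotopy argument for uniqueness).

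However, one intermediate claim in your existence step is false, and it misplaces where the real difficulty lies. A torus $S$ that is essential (incompressible and not boundary-parallel) in a complementary piece $N$ of a maximal collection $\mathcal{T}$ can \emph{never} be parallel in $M$ to a torus $T_0$ of $\mathcal{T}$: the product region $P \cong T^2 \times I$ between $S$ and $T_0$ either lies entirely in $N$, exhibiting $S$ as boundary-parallel in $N$ (contradiction), or contains further tori of $\mathcal{T}$, which, being incompressible in $P$, would be parallel to $T_0$, contradicting that $\mathcal{T}$ is pairwise non-parallel. Consequently a maximal collection always cuts $M$ into \emph{atoroidal} pieces, and the dichotomy ``atoroidal or Seifert fibered'' is not an obstruction at the existence stage; existence of some admissible collection, hence of a minimal one, is the easy half. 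The Seifert-fibered alternative is what rescues \emph{uniqueness}: minimal collections whose pieces are merely atoroidal are far from unique (for $F \times S^1$ with $F$ a closed genus-two surface, every pants decomposition of $F$ crossed with $S^1$ gives one, while the JSJ collection is empty since the manifold is Seifert fibered), and the content of the theorem is that all of this non-uniqueness is absorbed into Seifert-fibered pieces, whose essential tori one declines to cut along. Your uniqueness paragraph --- vertical/horizontal positioning of annuli and tori in Seifert-fibered pieces, boundary-parallelism in atoroidal ones --- is where that content actually gets used, and as a sketch it is fine; just note that atoroidal Seifert-fibered pieces (e.g.\ pants times $S^1$) do contain essential annuli, so the case division there should be ``Seifert fibered'' versus ``atoroidal and not Seifert fibered,'' not ``Seifert fibered'' versus ``atoroidal.''
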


To generalize this result for solenoid embeddings, we need to consider embeddings such that infinitely many of the `solid torus minus a braid' pieces are hyperbolic.  As long as the braid has at least 3 strands, this should generically be the case.  If there are only 2 strands, the piece will always be Seifert fibered.

\begin{prop}\ulabel{hyperbolicbraids}{Proposition}
Given $n\geq3$, there exist (at least) two $n$-braids $B(n,i)$ in a solid torus $T$ such that the complements $T-B(n,i)$ have distinct hyperbolic structures for $i=1,2.$
\end{prop}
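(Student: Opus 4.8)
The plan is to recognize each complement $T-\hat\beta$ as a surface bundle over the circle and then to invoke Thurston's hyperbolization theorem for fibered $3$-manifolds. Write $D_n$ for the disk with $n$ punctures, and recall that the braid group $B_n$ is exactly the mapping class group of $D_n$ fixing $\partial D$ pointwise. Given $\beta\in B_n$, let $\hat\beta\subset\interior(T)$ denote its closure, a closed braid wrapping $n$ times around $T$. A meridian disk of $T$ meets $\hat\beta$ transversely in $n$ points; deleting these points turns the disk into a copy of $D_n$, and transporting the disk once around the longitude of $T$ returns it to itself via precisely the homeomorphism $\beta$. Hence $T-\hat\beta$ is the mapping torus $M_\beta$ of $\beta$, a $D_n$-bundle over $S^1$ (this is the same picture underlying the ``mapping cylinder over a punctured disk'' description used for the $\pi_1$ computation). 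Since $\beta$ fixes $\partial D$ pointwise, the boundary of $M_\beta$ consists of $\partial D\times S^1=\partial T$ together with one torus around each component of $\hat\beta$.

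The fiber satisfies $\chi(D_n)=1-n<0$ for $n\ge 2$, so Thurston's hyperbolization theorem for surface bundles applies: the interior of $M_\beta$ carries a complete finite-volume hyperbolic metric (with the torus ends, coming from $\partial T$ and from the strands of $\hat\beta$, realized as cusps) if and only if $\beta$ is pseudo-Anosov as a mapping class of $D_n$. When $\beta$ is instead periodic or reducible, $M_\beta$ is Seifert fibered or toroidal; this is exactly why the $2$-strand case is always Seifert fibered, since $B_2\cong\Z$ contains no pseudo-Anosov elements. Pseudo-Anosov elements of $B_n$ exist precisely for $n\ge 3$: it is classical that such braids exist for every $n\ge 3$, and for $n=3$ one may take $\sigma_1\sigma_2^{-1}$. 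Fix one such $\beta=\beta(n)\in B_n$ for each $n\ge 3$; then $M_\beta$ is hyperbolic.

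To obtain two complements with distinct hyperbolic structures, I would take $\beta_1=\beta$ and $\beta_2=\beta^2$. A power of a pseudo-Anosov map is again pseudo-Anosov, preserving the same invariant foliations with dilatation $\lambda^2$, so both $M_{\beta_1}$ and $M_{\beta_2}$ are hyperbolic. Pulling the fibration $M_{\beta_1}\to S^1$ back along the connected degree-two cover $S^1\to S^1$, $z\mapsto z^2$, produces a fiber bundle over a connected base with connected fiber $D_n$; this is a connected two-fold cover of $M_{\beta_1}$ whose monodromy is $\beta^2$, i.e.\ it is $M_{\beta_2}$. Consequently $\op{vol}(M_{\beta_2})=2\,\op{vol}(M_{\beta_1})$. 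Since hyperbolic volume is an isometry invariant and both volumes are positive and finite, the two structures cannot be isometric; equivalently, by Mostow--Prasad rigidity the complements are non-homeomorphic. Setting $B(n,i)=\widehat{\beta_i}$ then completes the construction.

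The main obstacle is the geometric input rather than the combinatorics: one must verify carefully that $T-\hat\beta$ genuinely is the fibered manifold $M_\beta$ with the braid as monodromy, and then apply Thurston's hyperbolization theorem correctly, checking that every torus end becomes a cusp of a complete finite-volume structure. Once hyperbolicity is established, the volume-doubling argument separates the two structures with essentially no further work. I should add one caveat for the intended application: if one also requires each $B(n,i)$ to be \emph{transitive} on strands (so that $\widehat{\beta_i}$ is a knot rather than a link), then $\beta^2$ may fail to be transitive, and instead one fixes two transitive pseudo-Anosov braids and distinguishes their mapping tori either by a direct volume comparison or by the observation that a fixed hyperbolic $3$-manifold admits only finitely many fibrations with fiber an $n$-punctured disk, so that infinitely many pseudo-Anosov braids must yield infinitely many distinct complements.
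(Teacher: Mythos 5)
Your proposal is correct, and its core is the same as the paper's: identify $T-\hat\beta$ with the mapping torus of an $n$-punctured disk and apply Thurston's hyperbolization theorem for fibered $3$-manifolds, so that hyperbolicity of the complement is equivalent to the braid being pseudo-Anosov. Where you genuinely diverge is on the point the paper's proof treats most lightly: exhibiting \emph{two distinct} hyperbolic structures. The paper's proof only cites Thurston's statement that pseudo-Anosov monodromy is generic, and the actual distinctness is deferred to the explicit braids of \ref{table1}, whose volumes are computed with SnapPea; your route instead takes $\beta$ and $\beta^2$ and observes that $M_{\beta^2}$ is the connected double cover of $M_\beta$ dual to the fiber, so $\op{vol}(M_{\beta^2})=2\op{vol}(M_\beta)$ and the two structures cannot be isometric. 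This is cleaner, fully self-contained, and requires no computation. Your closing caveat is also well taken and is the one real gap between the proposition as stated and how it is used: in \ref{thm-geometry} each closed braid must be a knot (the nested-tori construction needs a connected next-level torus), i.e.\ the braid must be transitive, and $\beta^2$ can fail this (e.g.\ when the underlying permutation is an $n$-cycle with $n$ even). In that setting one falls back on either a direct volume comparison of two transitive pseudo-Anosov braids --- which is essentially what the paper does --- or on the Thurston-norm finiteness argument you sketch: a finite-volume hyperbolic manifold admits only finitely many fibrations whose fiber is an $n$-punctured disk, so infinitely many pairwise non-conjugate transitive pseudo-Anosov braids yield infinitely many distinct complements. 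Either way both the proposition and the paper's application of it go through.
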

\begin{proof}
An $n$-braid in a solid torus is the mapping torus of an $n$-punctured disk $B^2$.  Thurston \cite{thurston-bull, thurston-hypII} proves that such manifolds are hyperbolic precisely when the monodromy is pseudo-Anosov, and states that this is in fact the generic case (see Theorem 0.1 in \cite{thurston-hypII}).
\end{proof}

The proof above using Thurston's results only shows that
an $n$-braid in a solid torus will generically give a hyperbolic 3-manifold with 2 cusps, without constructing specific examples.  For a fixed choice of $n$, we can construct specific examples with different hyperbolic structures quite easily, and in \ref{table1} we present a few specific braids for $n=3,4,5$ in terms of the standard braid generators $\sigma_i$.  In general, it seems that the braids $\prod_{i=1}^{n-1} \sigma_i^{e_i}$, where $e_i=\pm1$, each give different volumes, unless there is either some obvious symmetry (i.e.\ $-++$ gives the same as $++-$, $+--$ and $--+$), or if it is Seifert fibered (i.e.\ $---$ or $+++$).  Of course, for $n=3$ we must add extra twisting, since there are only 2 generators $\sigma_i$, which only gives one hyperbolic 3-braid knot with two crossings, up to symmetry.
The hyperbolic volumes given in \ref{table1} were calculated using SnapPea \cite{weeks:snappea}.

\begin{table}
\begin{center}
\begin{tabular}{|c|c|c|}
\hline
$n$ & Braid & Hyperbolic Volume\\
\hline
3& $\sigma_1^{-1} \sigma_2$ & 4.05 \\
 & $\sigma_1^{-3} \sigma_2$ & 5.97 \\
\hline
4& $\sigma_1^{-1} \sigma_2 \sigma_3$ & 4.85 \\
 & $\sigma_1^{-1} \sigma_2 \sigma_3^{-1}$ & 7.51 \\
\hline
5& $\sigma_1^{-1} \sigma_2 \sigma_3 \sigma_4$ & 5.08 \\
 & $\sigma_1^{-1} \sigma_2^{-1} \sigma_3 \sigma_4$ & 5.90 \\
 & $\sigma_1^{-1} \sigma_2 \sigma_3^{-1} \sigma_4$ & 11.2 \\
\hline
\end{tabular}
\end{center}
\caption{Braids in a solid torus with distinct hyperbolic volumes.}\ulabel{table1}{Table}
\end{table}

Recall that hyperbolic structures on 3-manifolds are in fact topological invariants, as given by Mostow-Prasad rigidity \cite{mostow,prasad}:
\begin{thm}[Mostow-Prasad Rigidity]
If a 3-manifold admits a complete hyperbolic structure with finite volume, then that structure is unique up to isometry.
\end{thm}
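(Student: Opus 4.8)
The plan is to prove the statement in the form used in Mostow's original argument: if $M$ and $N$ are complete finite-volume hyperbolic $3$-manifolds and $f\colon M\to N$ is a homotopy equivalence, then $f$ is homotopic to an isometry. Applying this to the identity homotopy equivalence between two complete finite-volume hyperbolic structures on one underlying manifold yields the desired uniqueness up to isometry. Writing $M=\mathbb{H}^3/\Gamma$ and $N=\mathbb{H}^3/\Lambda$ with $\Gamma,\Lambda$ discrete torsion-free subgroups of $\mathrm{Isom}^+(\mathbb{H}^3)=\mathrm{PSL}_2(\mathbb{C})$, the homotopy equivalence induces an isomorphism $f_*\colon\Gamma\to\Lambda$, and I would lift $f$ to an $f_*$-equivariant map $\tilde f\colon\mathbb{H}^3\to\mathbb{H}^3$.

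The first main step is to show that $\tilde f$ is a quasi-isometry. In the closed case this is immediate from cocompactness of the $\Gamma$-action together with continuity of $\tilde f$; in the finite-volume case one must instead work with the thick--thin decomposition, controlling $\tilde f$ over the compact thick part and separately over the rank-two cusp neighborhoods. This last point is precisely the content of Prasad's extension of Mostow's theorem, and it is where non-compactness forces genuine extra care. Once $\tilde f$ is a quasi-isometry, it sends geodesics to quasigeodesics that fellow-travel honest geodesics (the Morse lemma), so it extends continuously to a homeomorphism $\partial\tilde f\colon S^2\to S^2$ of the sphere at infinity $S^2=\partial\mathbb{H}^3$, and equivariance persists on the boundary: $\partial\tilde f\circ\gamma=f_*(\gamma)\circ\partial\tilde f$ for every $\gamma\in\Gamma$.

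The decisive step is to upgrade $\partial\tilde f$ from merely quasiconformal to genuinely conformal, i.e.\ to an element of $\mathrm{PSL}_2(\mathbb{C})$ acting by Möbius transformations on $S^2=\mathbb{C}\cup\{\infty\}$. Here I would pass to the measurable category: a quasiconformal homeomorphism of $S^2$ is differentiable almost everywhere with bounded dilatation, and the equivariance pushes its conformal-distortion (Beltrami) field to a $\Gamma$-invariant measurable conformal structure on $S^2$. The $\Gamma$-action on $S^2$ is ergodic for the Lebesgue measure class (equivalently, the geodesic flow on the unit tangent bundle of the finite-volume manifold $M$ is ergodic), so the distortion field is essentially constant, and a brief normalization forces it to be trivial almost everywhere; hence $\partial\tilde f$ is conformal. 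It is exactly this conformality conclusion that fails in dimension $2$ and accounts for Teichmüller space, so this is the step that truly uses $\dim\geq 3$, and I expect it to be the hard part of the argument.

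Finally, a conformal homeomorphism of $S^2$ is the boundary value of a unique isometry $\phi\in\mathrm{Isom}(\mathbb{H}^3)$, and boundary equivariance gives $\phi\gamma\phi^{-1}=f_*(\gamma)$, so $\phi$ descends to an isometry $M\to N$ realizing $f_*$, which a standard argument shows is homotopic to $f$. An alternative route I would keep in reserve replaces the ergodic-theoretic core with Gromov's simplicial volume: one establishes $\|M\|=\mathrm{Vol}(M)/v_3$ with $v_3$ the volume of the regular ideal tetrahedron, observes that a homotopy equivalence preserves $\|\cdot\|$ and hence volume, and then exploits the rigidity of the maximal-volume configuration of straightened ideal simplices to reconstruct the isometry. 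In that approach too, the bookkeeping over the cusps in the finite-volume setting is the most delicate point.
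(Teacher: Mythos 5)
The first thing to note is that the paper does not prove this statement at all: Mostow--Prasad rigidity is quoted as a classical black box, with citations to \cite{mostow,prasad}, and is used only in combination with the JSJ decomposition to conclude that hyperbolic pieces with distinct structures yield non-homeomorphic solenoid complements. So there is no in-paper proof to compare yours against, and reproducing one is not something the paper's argument requires.

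As an outline, what you wrote is a faithful roadmap of the standard argument: lift a homotopy equivalence to a $\Gamma$-equivariant map of $\mathbb{H}^3$, show it is a quasi-isometry (with the thick--thin/cusp analysis replacing cocompactness being exactly Prasad's contribution in the finite-volume case), extend it to an equivariant quasiconformal homeomorphism of $S^2$ via the Morse lemma, use ergodicity to rigidify the boundary map, and recognize a conformal homeomorphism of $S^2$ as a M\"obius transformation bounding an isometry; the Gromov--Thurston simplicial-volume route you keep in reserve is indeed a genuine second proof. But be clear that this is a plan, not a proof: each main step (quasi-isometric control over the cusps, a.e.\ differentiability of quasiconformal maps, ergodicity of the $\Gamma$-action for finite-volume quotients, and the final ``normalization'') is a substantial theorem whose proof you have not supplied. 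The normalization step deserves particular care: knowing the dilatation is essentially constant does not by itself make the boundary map conformal; one must show the invariant measurable ellipse field is trivial, which requires a further argument (for instance, a nontrivial field of major axes would give a measurable invariant line field on $S^2$, contradicting ergodicity of the action on pairs of boundary points, equivalently of the frame flow). At the level of detail given, your text is an accurate summary of the literature --- which is precisely how the paper itself treats the theorem.
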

Using Mostow-Prasad rigidity and \ref{hyperbolicbraids}, we are able to prove the existence of inequivalent non-Abelian embeddings for any given solenoid.

\begin{thm}\ulabel{thm-geometry}{Theorem}
For any solenoid, there exist uncountably many inequivalent non-Abelian embeddings, i.e.\ such that the complements are different manifolds.
\end{thm}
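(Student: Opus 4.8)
The plan is to realize $\Sigma$ by an embedding all of whose successive ``solid torus minus braid'' pieces are hyperbolic, and then to read off an infinite ordered sequence of hyperbolic volumes that is a homeomorphism invariant of the complement. First I would fix a convenient defining sequence: a genuine (noncircle) solenoid has infinitely many prime factors counted with multiplicity, so after discarding $1$'s and grouping consecutive terms I may replace $\{n_i\}$ by a sequence $\{m_j\}$, defining the same solenoid, with every $m_j\ge 3$. I take $T_0$ standardly embedded, and at each level $j$ I insert one of the two braids $B(m_j,i_j)$ from \ref{hyperbolicbraids}, indexed by a choice $i_j\in\{1,2\}$; I choose these transitive (the tabulated examples are $m_j$-cycles), so that each closed braid is a knot and the nested intersection is a genuine solenoid embedding. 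The piece $T_{j-1}-\interior(T_j)$ is then precisely the solid-torus complement of this braid, whose hyperbolic structure---hence its volume $v(m_j,i_j)$---is intrinsic to the braid and unaffected by the twisting used to glue consecutive levels (absorbed exactly as in \ref{sec-knotted}); and $v(m_j,1)\ne v(m_j,2)$ by \ref{hyperbolicbraids}. Every $(i_j)\in\{1,2\}^{\N}$ gives such an embedding, and because each piece is hyperbolic its fundamental group is non-Abelian, so by \ref{directlimit} so is $\pi_1(\Sigma^c)$.

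Next I would promote the nested tori $\{\partial T_j\}$ to a canonical decomposition of the open manifold $\Sigma^c$. Each $\partial T_j$ is incompressible, being a cusp torus of the adjacent hyperbolic piece, and incompressibility persists in $\Sigma^c$ by the injectivity of \ref{directlimit}. Exhaust $\Sigma^c=\bigcup_j\interior(M_j)$ with $M_j=S^3-\interior(T_j)$ compact; the JSJ-decomposition (\ref{JSJ-thm}) of each $M_j$ consists of the Seifert-fibered outer solid torus $S^3-\interior(T_0)$ together with the hyperbolic pieces $T_{k-1}-\interior(T_k)$ for $1\le k\le j$, cut along $\partial T_0,\dots,\partial T_{j-1}$. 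Since a solid torus and a hyperbolic (atoroidal) piece contain no essential tori other than boundary-parallel ones, JSJ uniqueness in each compact $M_j$ shows that every incompressible torus in $\Sigma^c$ is isotopic to some $\partial T_k$.

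Now let $\phi\colon\Sigma^c\to(\Sigma')^c$ be a homeomorphism between two such complements. Being a homeomorphism it is proper, so each compact incompressible torus $\phi(\partial T_j)$ lies in some exhaustion level $\interior(M'_N)$ and, by the previous paragraph applied in $M'_N$, is isotopic there to a unique interior $\partial T'_{k(j)}$. As $j$ grows the $\partial T_j$ march toward the end accumulating on $\Sigma$ and, by properness, so do their images; the linear nesting is preserved, so $j\mapsto k(j)$ is order-preserving and a bijection off a finite set. Thus $\phi$ carries the hyperbolic pieces of one complement to those of the other, in order and up to a finite shift. By Mostow--Prasad rigidity corresponding pieces are isometric, so the ordered volume sequence $\big(v(m_j,i_j)\big)_j$ is an invariant of $\Sigma^c$ up to finitely many initial terms and an overall shift.

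Finally, two choice sequences whose volume sequences do not eventually agree after some shift yield non-homeomorphic complements. Each such tail-with-shift class of $\{1,2\}^{\N}$ is countable (a sequence has countably many tails, and countably many finite prefixes can precede a given tail), so there are uncountably many classes, hence uncountably many pairwise non-homeomorphic non-Abelian embeddings of $\Sigma$. The main obstacle is the passage in the middle paragraphs from the compact pieces $M_j$ to the open manifold $\Sigma^c$: namely, making rigorous the claim that an arbitrary homeomorphism of the complements must respect the infinite nested tower of incompressible tori near the solenoid end (a generalized JSJ uniqueness in this non-compact setting), so that the per-level hyperbolic volumes may legitimately be compared.
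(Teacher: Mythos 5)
Your strategy is the paper's strategy (two hyperbolic braids from \ref{hyperbolicbraids} at each level, compactness plus JSJ on compact subpieces to show every incompressible torus is isotopic to a defining torus, Mostow--Prasad to read off the ordered pieces), but there is a genuine gap at the very first step: you take $T_0$ standardly embedded and never ensure that the nested tori $T_j$ are knotted in $S^3$, and your justification that each $\partial T_j$ is incompressible in $\Sigma^c$ is false as stated. Incompressibility of $\partial T_j$ as a cusp torus of the adjacent hyperbolic piece only rules out compressing disks on that side; a compressing disk can lie on the far side, in $S^3-\interior(T_j)$, and one exists there exactly when the core of $T_j$ is an unknot. Likewise, \ref{directlimit} gives injectivity of the knot groups $\pi_1(S^3-T_j)$ into $\pi_1(\Sigma^c)$, \emph{not} of the piece groups $\pi_1(T_{j-1}-T_j)$, so hyperbolicity of the pieces does not by itself make $\pi_1(\Sigma^c)$ non-Abelian. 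This is not a hypothetical worry: several braids in \ref{table1}, e.g.\ $\sigma_1^{-1}\sigma_2$ and $\sigma_1^{-1}\sigma_2\sigma_3$, close up to the unknot in a standardly embedded solid torus (destabilize repeatedly), and by the re-embedding trick of \ref{sec-unknotted} the gluings can be chosen so that \emph{every} $T_j$ is unknotted. In that case your tower consists entirely of hyperbolic pieces, and yet $\pi_1(\Sigma^c)=\dirlim(\Z,n_i)$ is Abelian, every $\partial T_j$ is compressible, and the volume sequence cannot be extracted: the construction has produced an unknotted embedding, not a non-Abelian one. For the same reason your description of the JSJ of $M_j$ is wrong: $\partial T_0$ bounds the complementary solid torus and is therefore compressible in $M_j$, so it cannot appear as a JSJ torus.

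The fix is exactly the first line of the paper's proof: take $T_0$ \emph{knotted} (with atoroidal complement, say). Then every $T_j$ is a satellite with nontrivial companion and winding number $n_1\cdots n_j\neq 0$, hence a nontrivial knot; consequently every $\partial T_j$ is incompressible in $\Sigma^c$, every $\pi_1(S^3-T_j)$ is non-Abelian (so $\pi_1(\Sigma^c)$ is non-Abelian by \ref{directlimit}), and the decomposition acquires a one-cusp piece $S^3-\interior(T_0)$ that anchors the ordering. This anchor also removes the shift ambiguity you allow for, so any two distinct choice sequences already give distinct complements; your tail-class counting, while correct, becomes unnecessary. Finally, note that the obstacle you flag in your last paragraph --- canonicity of the infinite decomposition --- is in fact handled by the exhaustion argument you (and the paper) give: an incompressible torus $T^*$ in $\Sigma^c$ is at positive distance from $\Sigma$, so some small $T_k$ lies on the $\Sigma$-side of $T^*$, and the usual JSJ uniqueness (\ref{JSJ-thm}) applied to the compact manifold $S^3-\interior(T_k)$ forces $T^*$ to be isotopic to a defining torus. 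The unaddressed gap is the knottedness of the tower, not the non-compact JSJ.
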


\begin{proof}
Choose a defining sequence $n_i$ for the solenoid $\Sigma$, with the condition that $n_i\neq2$.  If necessary, we may take the product of consecutive terms $n_i$ to ensure that $n_i\neq2$.

We will construct different non-Abelian embeddings of $\Sigma$.  Let $T_0$ be a knotted solid torus with cross-sectional diameter 1 in $S^3$.  To the complement of $T_0$, glue in either $T-B(n_1,1)$ or $T-B(n_1,2)$, one of the hyperbolic manifolds from \ref{hyperbolicbraids}.  Continue attaching either $T-B(n_i,1)$ or $T-B(n_i,2)$. As we fill in the braids, make sure that the cross-sectional diameter of each braid is less than half the diameter of the previous level.  This will embed the solenoid $\Sigma(n_i)$.  As we have two choices at each stage, there are uncountably many ways of doing this.  It remains to show that these each give different complements.

We will use the JSJ-decomposition.
Take any incompressible torus $T^*$ in $S^3-\Sigma$.  This cuts $S^3$ into a compact piece and a noncompact piece, because $\Sigma$ is connected.  There is a small torus $T_k$ in our construction that lies inside the non-compact piece, as $T^*$ is bounded away from $\Sigma$, and we ensured that the tori $T_i$ had cross-sectional diameter less than $2^{-i}$.  This torus $T_k$ then cuts $S^3$ into two new pieces, again one compact and one not, with the originally chosen incompressible torus $T^*$ in the compact piece.  Now apply the JSJ-decomposition (\ref{JSJ-thm}) to the compact piece.  As the pieces $T^2-B(n,i)$ in our construction were chosen to be hyperbolic they are atoroidal, and thus the torus $T^*$ must be isotopic to one of our defining tori $T_i$.

Thus we get a canonical JSJ-decomposition of our solenoid complement, with every incompressible torus in the complement being isotopic to one of the defining tori.  In particular, the incompressible tori cut $S^3-\Sigma$ into pieces, one of which has one cusp (the original knot complement), and all the rest having 2 cusps.  These pieces may be ordered by taking the piece with one cusp as the first, and then considering which other pieces share a common boundary.  So we have a canonical way of cutting up the solenoid complement into these ordered pieces.  If any of the pieces are different at any spot in the sequence, the resulting manifolds are distinct, which proves the theorem.
\end{proof}

\begin{cor}
Let $\{n_i\}$ be any defining sequence of a solenoid, other than a sequence that is eventually 2 for the dyadic solenoid. Then there are uncountably many inequivalent embeddings of the solenoid using the sequence $n_i$.
\end{cor}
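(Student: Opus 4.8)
The plan is to adapt the proof of \ref{thm-geometry}, the only new feature being that we are no longer permitted to regroup the sequence $\{n_i\}$ so as to eliminate the terms equal to $2$; instead we carry out the nested construction directly on the given sequence. Fix a knotted solid torus $T_0$ whose complement $S^3-\interior(T_0)$ is hyperbolic (take the knot to be a hyperbolic knot), so that it is atoroidal and forms a distinguished one-cusp piece. At each level proceed as follows: whenever $n_i\geq 3$, insert one of the two hyperbolic braids $B(n_i,1)$ or $B(n_i,2)$ supplied by \ref{hyperbolicbraids}, recording a binary choice; whenever $n_i=2$, there is no hyperbolic $2$-braid available, so instead insert a single fixed transitive $2$-braid, whose complement is Seifert fibered. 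As in \ref{thm-geometry} we force the cross-sectional diameters below $2^{-i}$ at each stage, so the nested intersection is an embedding of $\Sigma(n_i)$ using precisely the prescribed sequence.

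Because $\{n_i\}$ is not eventually $2$, there are infinitely many indices with $n_i\geq 3$, and at each such index we make an independent binary choice; this already yields uncountably many embeddings, and it remains to distinguish their complements. Here I would run the torus-trapping argument of \ref{thm-geometry} as before: any incompressible torus $T^*$ in $S^3-\Sigma$ is bounded away from $\Sigma$, so some defining torus $T_k$ lies in its non-compact complementary region, cutting off a compact irreducible piece to which \ref{JSJ-thm} applies, and near any hyperbolic piece $T_{i-1}-B(n_i,\cdot)$ the atoroidality forces $T^*$ to be isotopic to a defining torus.

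The one genuinely new point, and the main obstacle, is the presence of the Seifert-fibered $2$-level pieces, which are not atoroidal and which are exactly what the regrouping in \ref{thm-geometry} was designed to avoid. I would argue that they are inert. Since the same fixed $2$-braid is used at every $2$-level of every embedding, the Seifert regions are identical across all our embeddings, and a block of consecutive $2$-levels may amalgamate into a single Seifert piece of the minimal collection. However, amalgamation in the JSJ decomposition occurs only between adjacent Seifert-fibered pieces with compatible fibrations, so a torus bounding a hyperbolic piece can never be removed from the canonical collection. Consequently the hyperbolic pieces persist as honest JSJ pieces, linearly ordered by the nesting $T_0\supset T_1\supset\cdots$ along the unique ray-shaped dual graph, anchored at the finite end by the one-cusp piece $S^3-\interior(T_0)$ and limiting to the solenoidal end.

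To finish, I would observe that any homeomorphism of two such complements respects the JSJ decomposition up to isotopy, hence preserves the linear order and matches the $k$-th hyperbolic piece of one embedding with the $k$-th of the other. Since the defining sequence is shared, both correspond to the same index $i_k$, namely the $k$-th level with $n_{i_k}\geq 3$, so the matched pieces are $T_{i_k-1}-B(n_{i_k},\cdot)$ for the two chosen parameters. If the embeddings differ in their choice at some such level, \ref{hyperbolicbraids} gives these two pieces distinct hyperbolic volumes, so by Mostow-Prasad rigidity they are non-homeomorphic and no such matching can exist; hence the complements are distinct manifolds. The crux is thus the verification that amalgamation of interspersed $2$-level Seifert blocks can neither absorb nor reorder the hyperbolic pieces, so that the ordered list of their volumes remains a homeomorphism invariant; once that is established, the volume-comparison argument of \ref{thm-geometry} carries over unchanged.
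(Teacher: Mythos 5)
Your proposal follows essentially the same route as the paper's own proof: at the levels with $n_i=2$ insert a fixed Seifert-fibered $2$-braid, make independent binary choices among the hyperbolic braids of \ref{hyperbolicbraids} at the infinitely many levels with $n_i\geq 3$, and then run the torus-trapping/generalized JSJ argument of \ref{thm-geometry}, concluding that any incompressible torus is either a defining torus or lies in a Seifert-fibered piece. Your extra attention to amalgamation of adjacent Seifert blocks and to why the linear (ray) order of the hyperbolic pieces is preserved under homeomorphism spells out details the paper leaves implicit, but the underlying argument is the same.
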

\begin{proof}
Proceed with the construction as in the proof of the theorem, except when $n_i=2$, fill in with any Seifert fibered 2-braid.  In fact, all we need is that infinitely many of the pieces are hyperbolic.  Then to get the generalized JSJ-decomposition, when given an incompressible torus $T^*$, choose the small torus $T_k$ such that $T_k$ represents the inner braid in one of the hyperbolic pieces.  Again we may apply the standard JSJ-decomposition to the compact complementary component of $T_k$. This gives us that $T^*$ is either one of our defining tori $T_i$, or that $T^*$ lies in one of the Seifert fibered pieces.

Again, we get a canonical JSJ-decomposition, where on each compact piece we take the minimal collection of tori guaranteed by the standard JSJ-decomposition.  As we have infinitely many hyperbolic pieces, as we can choose to fill in with non-isometric pieces, we get uncountably many distinct complements.
\end{proof}

Note that this proof cannot be extended to the defining sequence $n_i\equiv2$, as the homeomorphism type of a solid torus minus any 2-braid is only dependent on the number of components.  As we have only been considering knots, we will always have 1 component, thus one homeomorphism type of a solid torus minus a 2-braid.

\appendix

\section{Subgroups of $\Q$}
This lemma characterizes the additive subgroups of the rational numbers.  We note that these subgroups were previously discussed and characterized in \cite{beaumont,baer}, but we give our own proof here.
Note that for additive subgroups of $\Q$, multiplication by a constant is an isomorphism, so that we may assume that the subgroup contains 1.  In the lemma, the numbers $k_i$ represent the number of times (plus 1) that the prime $p_i$ is allowed to appear in the denominators of the subgroup elements.

\begin{lem}\ulabel{subgroups of Q}{Lemma}
Let $\{k_i\}$ be a sequence in $\mathbb N\cup\infty$.  Define
$$Q\big(\{k_i\}\big)=\left\{\frac pq\in\mathbb Q ~\middle|~ q =
\prod_{i=1}^m
p_i^{n_i} \quad\text{ for some } n_i< k_i \text{ and some } m 
\right\}
$$
where $p_i$ denotes the $i^{th}$ prime number.

Then $Q\big(\{k_i\}\big)$ is a subgroup of $\mathbb Q$ containing 1. Furthermore,
every subgroup $G\leq\mathbb Q$ containing 1 is equal to $Q\big(\{k_i\}\big)$
for some sequence $\{k_i\}$.
\end{lem}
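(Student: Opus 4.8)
The plan is to translate membership in $Q(\{k_i\})$ into a statement about the $p_i$-adic valuations of denominators. Writing any $x\in\Q$ in lowest terms as $x=a/b$ with $b=\prod_i p_i^{b_i}$, one checks directly from the definition that $x\in Q(\{k_i\})$ if and only if $b_i<k_i$ for every $i$: any reduced denominator $b$ divides an allowed $q=\prod p_i^{n_i}$ (with $n_i<k_i$) precisely when its own exponents satisfy $b_i<k_i$, and conversely the reduced denominator of $a/q$ always divides $q$. With this reformulation the subgroup property is immediate: $Q(\{k_i\})$ contains $0$ and $1$, and for $x=a/b$, $y=c/d$ in lowest terms the reduced denominator of $x-y$ divides $\operatorname{lcm}(b,d)$, whose $p_i$-exponent is $\max(b_i,d_i)<k_i$. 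Hence $x-y\in Q(\{k_i\})$.

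For the classification, I would first record a valuation lemma for any subgroup $G\leq\Q$ with $1\in G$ (so that $\Z\subseteq G$): for a prime $p$ and $m\geq 1$, the group $G$ contains an element whose reduced denominator is divisible by $p^m$ if and only if $1/p^m\in G$. The forward direction is the only content: given such $x=a/(p^m c)$ with $p\nmid c$, multiplying by the integer $c$ gives $a/p^m\in G$ with $\gcd(a,p)=1$, and then Bezout ($ua+vp^m=1$) yields $1/p^m=u\,(a/p^m)+v\in G$. The set of achievable exponents is downward closed in $m$, so I can define $k_i=1+\sup\{m\geq 0:\ 1/p_i^m\in G\}\in\N\cup\{\infty\}$, which is at least $1$ because $1\in G$.

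I would then prove $G=Q(\{k_i\})$ by two inclusions. For $G\subseteq Q(\{k_i\})$, take $x\in G$ in lowest terms with denominator exponents $b_i$; the valuation lemma gives $1/p_i^{b_i}\in G$ whenever $b_i\geq 1$, so $b_i\leq k_i-1<k_i$, and by the reformulation $x\in Q(\{k_i\})$. For $Q(\{k_i\})\subseteq G$, take $x=a/q$ with $q=\prod_{i=1}^m p_i^{n_i}$ and each $n_i<k_i$; then each $1/p_i^{n_i}$ lies in $G$, and since the prime powers $p_i^{n_i}$ are pairwise coprime a repeated Bezout step assembles $1/q\in G$ (from $1/p^{n}\in G$ and $1/r\in G$ with $\gcd(p^n,r)=1$ one writes $1/(p^n r)=u/r+v/p^n$). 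Multiplying by the integer $a$ gives $x\in G$.

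The step I expect to be the crux is the inclusion $Q(\{k_i\})\subseteq G$, because $G$ is closed under addition and under scaling by integers but \emph{not} under multiplication of two arbitrary elements; one cannot simply multiply the elements $1/p_i^{n_i}\in G$ to obtain $1/q$. The coprimality/Bezout assembly is exactly what bridges this gap, and together with the ``valuation $\Leftrightarrow 1/p^m$'' lemma it carries the whole argument; the remaining verifications (closure under subtraction, downward closure, and the lowest-terms reformulation) are routine bookkeeping.
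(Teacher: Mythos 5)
Your proof is correct and follows essentially the same route as the paper's: both define $k_i$ from the maximal power of $p_i$ occurring in reduced denominators of $G$, use a Bezout/modular-inverse step to pass from $a/q \in G$ to $1/q \in G$, and use coprimality of the prime powers to assemble $1/\prod p_i^{n_i}$ from the elements $1/p_i^{n_i}$. Your per-prime valuation lemma and iterated two-term Bezout assembly are just a mild repackaging of the paper's set $D$ of denominators and its partial-fraction decomposition $\sum a_i/p_i^{n_i} = 1\big/\prod p_i^{n_i}$.
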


\begin{proof}
Since the definition does not require the fraction $p/q$ to be in lowest terms, $Q\big(\{k_i\}\big)$ is clearly closed under addition and inverses, and is thus a subgroup containing 1.

Let $Q$ be any subgroup of $\Q$ containing 1.  Let $D$ be the set of denominators of elements of $Q$ when written in lowest terms, i.e.\ $D=\{q~|~p/q\in Q \text{ in lowest terms}\}$.  Note that for every $q\in D$, we must have $1/q\in Q$, since $p/q\in Q$ with $(p,q)=1$, so that if we multiple $p/q$ by the multiplicative inverse of $m\mod q$ we get $mp/q=M+1/q$.  Since $1\in Q$, then $1/q\in Q$.  Then also $a/q\in Q$ for every $a\in\Z$ and $q\in D$, and in fact $Q$ is the set of all such numbers $\{a/q\}$, as every element of $Q$ is equal to a reduced fraction with denominator $q\in D$.

Define the number $k_i\in\N\cup\infty$ to be one more than the maximum number of times the prime $p_i$ appears in an element of $D$; $k_i=\sup\big\{ 1+k~\big|~ {p_i}^k \text{ divides } q \text{ for some } q\in D \big\}$. We first show that $Q\subset Q\big(\{ k_i \}\big)$.  Let $a/q\in Q$, where $q\in D$.  Consider the prime factorization $q =\prod_{i=1}^m p_i^{n_i}$, where $n_i<k_i$ by the definition of $k_i$.  Thus $a/q\in Q\big(\{ k_i \}\big)$ for every $a/q\in Q$.

It remains to show $Q\big(\{ k_i \}\big)\subset Q$.  Note that $Q\big(\{ k_i \}\big)$ is generated by elements of the form $ 1\big/\prod^m p_i^{n_i}$.  In fact, we can take elements of the form $1/p_i^{n_i}$ as our generating set: since the $p_i^{n_i}$ are relatively prime, we may choose $a_i$ so that $\sum (a_i/p_i^{n_i})=1\big/\prod^m p_i^{n_i}$.
Thus it suffices to show that $1/p_i^{n_i}\in Q$ if $n_i<k_i$.  By the definition of $k_i$, we know that there is an element $a/(bp_i^{n_i})\in Q$ in reduced form.  As before, since $a$ is relatively prime to the denominator $q$, we may multiply by the inverse of $a \mod q$ and thus assume that $a=1$.  Then multiplying by $b$ gives $1/p_i^{n_i}\in Q$.

Therefore every subgroup of $\Q$ is of the form $Q\big(\{k_i\}\big)$ for some sequence $\{k_i\}$.
\end{proof}

We note that while different sequences $\{k_i\}$ give distinct subsets of
$\Q$, they do not always give non-isomorphic subgroups.  This is due to the
fact that multiplication gives isomorphisms of subgroups of $\Q$.  Thus if
two sequences $\{k_i\},\{k_i'\}$ differ in only finitely many spots by a
finite amount (i.e.\ if whenever $k_i\neq k_i'$ then both are finite), then
the subgroups are isomorphic by multiplication/division by $\prod
p_i^{k_i-k_i'}$.  This is in fact the only way differing sequences can give
isomorphic groups.
\\

\begin{center}
\textsc{Acknowledgements}\\[10pt]
\end{center}

We would like to thank Jessica Purcell for helpful ideas and discussions. 
\\

\bibliographystyle{plain}
\bibliography{refs}

\end{document}